\begin{document}

\title{On the Order of  Polynilpotent Multipliers of Some Nilpotent Products of Cyclic $p$-Groups }
\author[Behrooz Mashayekhy  and Fahimeh Mohammadzadeh]{Behrooz Mashayekhy\affil{1}\comma\corrauth,
      Fahimeh Mohammadzadeh\affil{2}}
 \address{\affilnum{1}Department of Pure Mathematics, Center of Excellence in Analysis on \linebreak Algebraic Structures, Ferdowsi University of Mashhad, Mashhad, Iran. \\
           \affilnum{2}Department of Mathematics,
           Payame Noor University, Ahvaz, Iran.}
 \corraddr{Behrooz Mashayekhy, Department of Pure Mathematics, Center of Excellence in Analysis on Algebraic Structures,
Ferdowsi University of Mashhad, P.O. Box 1159-91775, Mashhad, Iran.
          Email: \tt mashaf@math.um.ac.ir}
\received{29 April~ 2009}

\newtheorem{thm}{Theorem}[section]
 \newtheorem{cor}[thm]{Corollary}
 \newtheorem{lem}[thm]{Lemma}
 \newtheorem{prop}[thm]{Proposition}
 \newtheorem{defn}[thm]{Definition}
 \newtheorem{rem}[thm]{Remark}
\newtheorem{Example}{Example}[section]

\begin{abstract}
In this article we show that if ${\cal V}$ is  the variety of
polynilpotent groups of class row $(c_1,c_2,...,c_s),\  {\mathcal
N}_{c_1,c_2,...,c_s}$, and  $G\cong{\bf
{Z}}_{p^{\alpha_1}}\stackrel{n}{*}{\bf
{Z}}_{p^{\alpha_2}}\stackrel{n}{*}...\stackrel{n}{*}{\bf{Z}}_{p^{\alpha_t} }$ is the $n$th nilpotent product of some cyclic $p$-groups, where
$c_1\geq n$, $\alpha_1 \geq \alpha_2 \geq...\geq \alpha_t $ and $
(q,p)=1$ for all primes $q$ less than or equal to $n$, then
$|{\mathcal N}_{c_1,c_2,...,c_s}M(G)|=p^{d_m}$ if and only if
$G\cong{\bf {Z}}_{p}\stackrel{n}{*}{\bf
{Z}}_{p}\stackrel{n}{*}...\stackrel{n}{*}{\bf{Z}}_{p
}$ ($m$-copies), where $m=\sum _{i=1}^t \alpha_i$ and
$d_m=\chi_{c_s+1}(...(\chi_{c_2+1}(\sum_{j=1}^n
\chi_{c_1+j}(m)))...)$. Also, we extend the result to the multiple nilpotent product  $G\cong{\bf
{Z}}_{p^{\alpha_1}}\stackrel{n_1}{*}{\bf
{Z}}_{p^{\alpha_2}}\stackrel{n_2}{*}...\stackrel{n_{t-1}}{*}{\bf{Z}}_{p^{\alpha_t} }$, where $c_1\geq n_1\geq...\geq n_{t-1}$. Finally a similar result is given for the $c$-nilpotent
multiplier of $G\cong{\bf
{Z}}_{p^{\alpha_1}}\stackrel{n}{*}{\bf
{Z}}_{p^{\alpha_2}}\stackrel{n}{*}...\stackrel{n}{*}{\bf{Z}}_{p^{\alpha_t}}$ with the different conditions $n \geq c$ and $ (q,p)=1$ for all primes $q$ less than or
equal to $n+c.$
\end{abstract}

\keywords{Polynilpotent multiplier; Nilpotent product; Cyclic group; Finite $p$-group; Elementary Abelian $p$-group.}
\ams{20C25, 20D15, 20E10, 20F18, 20F12.}
\maketitle

\section{Introduction and Motivation}
\label{sec1}
Let $G$ be any group with a presentation $G\cong F/R$, where F is
a free group. Then the Baer invariant of $G$ with respect to the
variety of groups $\mathcal{V}$, denoted by ${\mathcal V}M(G)$,
is defined to be
$${\mathcal V}M(G)=\frac{R \cap
V(F)}{[RV^{*}F]},$$ where $V$ is the set of words of the variety
$\mathcal{V}$, $V(F)$ is the verbal subgroup of $F$ and $$ [R
V^{*}F]=\langle v(f_1,...,f_{i-1},f_{i}r,f_{i+1},...,f_n)
 v(f_1,...,f_{i},...,f_n)^{-1} |$$ $$ r\in R, f_i\in F, v
\in V, 1\leq i\leq n, n \in \mathbf{N} \rangle.$$

One may check that
${\mathcal V}M(G)$ is abelian and independent of the choice of
the free presentation of $G$. In particular, if $\mathcal{V}$ is
the variety of abelian groups, $\mathcal{A}$, then the Baer
invariant of the group $G$ will be $ (R \cap F')/[R, F],$ which is
isomorphic to the well-known notion the Schur multiplier of $G$,
denoted by $M(G)$. If $\mathcal{V}$ is the variety of
polynilpotent groups of class row $(c_1,...,c_s)$, $ {\mathcal
N}_{c_1,c_2,...,c_s}$, then the Baer invariant of a group $G$
with respect to this variety, which is called a polynilpotent multiplier of $G$, is as follows:
$${\mathcal N}_{c_1,c_2,...,c_s} M(G)=\frac{R \cap \gamma_{c_s+1}\circ
...\circ \gamma_{c_1+1}(F)}{[R,\ _{c_1}F,\
_{c_2}\gamma_{c_1+1}(F),...,\ _{c_s}\gamma_{c_{s-1}+1}\circ
...\circ \gamma_{c_1+1}(F)]},$$ where $\gamma_{c_s+1}\circ
...\circ \gamma_{c_1+1}(F)=\gamma_{c_s+1}(\gamma_{c_{s-1}+1}( ...
(\gamma_{c_1+1}(F))...))$ are the term of iterated lower central
series of $F$. See Hekster [6] for the equality $$[R {\mathcal
N^{*}}_{c_1,c_2,...,c_s}F]=[R,\ _{c_1}F,\
_{c_2}\gamma_{c_1+1}(F),...,\ _{c_s}\gamma_{c_{s-1}+1}\circ
...\circ \gamma_{c_1+1}(F)].$$  In particular, if $s=1$ and
$c_1=c$, then the Baer invariant of $G$ with respect to the variety
${\mathcal N}_c$, which is called the $c$-nilpotent multiplier of
$G$, is $${\mathcal N}_cM(G)\cong\frac{R \cap \gamma _{c+1}(F)}{[R,\
_cF]}.$$

Historically, Green [4] showed that the order of the Schur
multiplier of a finite $p$-group of order $p^{n}$ is bounded by
$p^{\frac{n(n-1)}{2}}$. Berkovich [2] showed that a finite
$p$-group of order $p^{n}$ is an elementary abelian $p$-group if
and only if the order of $M(G)$ is $p^{n(n-1)/2}$. Moghaddam
[15,16] presented a bound for the polynilpotent multiplier of a
finite $p$-group. He showed that if  ${\mathcal V}$ is the
variety of polynilpotent groups of a given class row and $ G $ is
a finite $d$-generator group of order $p^n$, then
$$|{\mathcal V}M( {\mathbf Z}_{p}^{(d)})| \leq |{\mathcal
V}M(G)||V(G)|\leq|{\mathcal V}M({\mathbf Z}_{p}^{(n)})|,$$
 where
${\mathbf Z}_{n}^{(m)}$ denotes the direct sum of $m$ copies of
${\mathbf Z}_{n}$. As a consequence, using the structure of
${\mathcal V}M(\textbf{Z}_{p}^{(n)})$ in [12], we can show that the order of the nilpotent multiplier of
a finite $p$-group of order $p^n$ is bounded by
$p^{\chi_{c+1}(n)}$, where $\chi_{c+1}(n) $ is the number of
basic commutators of weight $c+1$ on $ n $ letters. The first
author and  Sanati [13] extended a result of  Berkovich to
the $c$-nilpotent multiplier of a finite $p$-group. They showed
that for an abelian $p$-group $G$, $|{\mathcal
N}_cM(G)|=p^{\chi_{c+1}(n)}$ if and only if  $G$ is an elementary
abelian $p$-group. Putting an additional condition on the kernel of
the left natural map of the generalized Stallings-Stammbach
five-term exact sequence, they showed that an arbitrary finite
$p$-group with the $c$-nilpotent multiplier of maximum order is
an elementary abelian $p$-group.

Unfortunately, there is a mistake in the proof of Theorem 3.5 in
[13] due to using the inequality
$i\chi_{c+1}(i)<\chi_{c+1}(i+1)$ which is not correct in general.
In this paper, first, we give a correct proof for Theorem 3.5
in [13]. Second, we extend the result in
different directions. In fact, we show that if ${\mathcal V}$ is the
variety of polynilpotent groups of class row $(c_1,c_2,...,c_s)$,
${\mathcal N}_{c_1,c_2,...,c_s}$, and  $G\cong{\bf
{Z}}_{p^{\alpha_1}}\stackrel{n}{*}{\bf
{Z}}_{p^{\alpha_2}}\stackrel{n}{*}...\stackrel{n}{*}{\bf{Z}}_{p^{\alpha_t}}$ is the $n$th nilpotent product of some cyclic $p$-groups,
where $c_1\geq n$, $\alpha_1 \geq \alpha_2 \geq...\geq \alpha_t $
and $ (q,p)=1$ for all primes $q$ less than or equal to $n$,  then
$|{\mathcal N}_{c_1,c_2,...,c_s}M(G)|=p^{d_m}$ if and only if
$G\cong{\bf {Z}}_{p}\stackrel{n}{*}{\bf
{Z}}_{p}\stackrel{n}{*}...\stackrel{n}{*}{\bf{Z}}_{p
}$ ($m$-copies), where $m=\sum _{i=1}^t \alpha_i$ and
$d_m=\chi_{c_s+1}(...(\chi_{c_2+1}(\sum_{j=1}^n
\chi_{c_1+j}(m)))...)$. Also, we extend the above result to the
multiple nilpotent product of cyclic $p$-groups  $G\cong{\bf
{Z}}_{p^{\alpha_1}}\stackrel{n_1}{*}{\bf
{Z}}_{p^{\alpha_2}}\stackrel{n_2}{*}...\stackrel{n_{t-1}}{*}{\bf{Z}}_{p^{\alpha_t}}$, when $c_1\geq n_1\geq...\geq n_{t-1}$. As a consequence we
show that the polynilpotent multiplier of a finite abelian
$p$-group $G$ has maximum order if and only if $G$ is an
elementary abelian $p$-group. Finally we give a similar result for
the $c$-nilpotent multiplier of $G\cong{\bf
{Z}}_{p^{\alpha_1}}\stackrel{n}{*}{\bf
{Z}}_{p^{\alpha_2}}\stackrel{n}{*}...\stackrel{n}{*}{\bf{Z}}_{p^{\alpha_t}}$, with the different conditions $n \geq c$  and $(q,p)=1$
for all primes $q$ less than or equal to $n+c.$

\section{Notation and Preliminaries}
\label{sec2}
\begin{defn} Let  $ \{ G_{i}\} _{i \in I } $ be
a family of arbitrary groups. The $ n $th nilpotent product of
the family $ \{ G_{i}\} _{i \in I } $ is defined as follows: $$
\prod ^{\stackrel{n}{*}} _{i \in I }G_{i} = \frac{ \prod ^{* }
_{i \in I } G_{i}}{ \gamma _{n+1}(\prod  ^{* } _{i \in I }G_{i})
\cap [G_{i}] ^{*} _{i \in I } },$$ where $ \prod ^{* } _{i \in I
}G_{i} $ is the free product of the family $ \{ G_{i} \}_{ i \in
I } $, and $$ [G_{i}] ^{*} _{i \in I } = \langle [ G_{i}, G_{j}]
| i,j \in I, i \neq j \rangle ^{ \prod ^{* } _{i \in I }G_{i}} $$
is the cartesian subgroup of the free product $ \prod  ^{* } _{i
\in I }G_{i} $ which is the kernel of the natural homomorphism
from $ \prod  ^{* } _{i \in I }G_{i} $ to the direct product $
\prod ^{\times } _{i \in I }G_{i} $. For further properties of
the above notation see Neumann [17]. If $ \{ G_{i}\} _{i \in I }
$ is a family of cyclic groups, then  $ \gamma _{n+1}(\prod  ^{* }
_{i \in I } G_{i}) \subseteq [G_{i}] ^{*}  $ and hence $ \prod
^{\stackrel{n}{*}} _{i \in I }G_{i} = \prod ^{* } _{i \in I }
G_{i} / \gamma _{n+1}(\prod  ^{* } _{i \in I
}G_{i})$.
\end{defn}
\begin{defn} A variety $\mathcal{V}$ is said to
be a Schur-Baer variety if  for any group $G$ for which the
marginal factor group $G/V^*(G)$ is finite, then the verbal
subgroup $V(G)$ is also finite and $|V(G)|$ divides a power of
$|G/V^*(G)|$.
\end{defn}

 Schur  proved that the variety of abelian groups, $\mathcal{A}$,
is a Schur-Baer variety (see [10]). Also, Baer [1] proved that if $u$ and $v$ have Schur-Baer property, then
the variety defined by the word $[u,v]$ has the above property.

The following theorem gives a very important property of
Schur-Baer varieties.
\begin{thm} (Leedham-Green, McKay [11]).
The following conditions on a variety
$\mathcal{V}$ are
equivalent: \\
(i) $\mathcal{V}$ is a Schur-Baer variety.\\
(ii) For every finite group $ G $, its Baer invariant,
${\mathcal V}M(G)$, is of order dividing a power of $|G|$.
\end{thm}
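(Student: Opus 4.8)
The plan is to prove the equivalence by forging, for a free presentation $G=F/R$, a tight link between the Baer invariant ${\mathcal V}M(G)$ and the verbal subgroup of the quotient $\bar F:=F/[RV^{*}F]$. The argument rests on two structural facts about the verbal--marginal machinery, both consequences of the single principle that multiplying an argument of a word $v\in V$ by a marginal element leaves the value $v(\ldots)$ unchanged. First, from the very definition of $[RV^{*}F]$ one sees that the image $R/[RV^{*}F]$ lands inside the marginal subgroup $V^{*}(\bar F)$. Second, a homomorphism $\theta$ whose image $\theta(F)$ satisfies $\theta(F)\,V^{*}(\cdot)=G$ fulfils $\theta(V(F))=V(\theta(F))=V(G)$, and $\theta$ annihilates the defining generators of any bracket $[SV^{*}F]$ built from a subgroup $S$ with $\theta(S)$ marginal. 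I would also record the easy containment $[RV^{*}F]\subseteq V(F)$, so that $V(\bar F)=V(F)/[RV^{*}F]$.

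For (i) $\Rightarrow$ (ii), let $G$ be finite, write $G=F/R$ with $F$ free of finite rank, and set $\bar F=F/[RV^{*}F]$, $\bar R=R/[RV^{*}F]$. First I would check $\bar R\subseteq V^{*}(\bar F)$; then $\bar F/V^{*}(\bar F)$ is a quotient of $\bar F/\bar R\cong G$ and hence finite. Applying the Schur--Baer hypothesis to $\bar F$ yields that $V(\bar F)$ is finite with $|V(\bar F)|$ dividing a power of $|\bar F/V^{*}(\bar F)|$, hence a power of $|G|$. Since ${\mathcal V}M(G)=(R\cap V(F))/[RV^{*}F]$ is a subgroup of $V(F)/[RV^{*}F]=V(\bar F)$, its order divides $|V(\bar F)|$ and therefore a power of $|G|$.

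For (ii) $\Rightarrow$ (i), let $G$ be arbitrary with $Q:=G/V^{*}(G)$ finite, and present $Q=F/S$ with $F$ free of finite rank. Choosing a lift $\theta\colon F\to G$ of the projection $F\to Q$ gives $\theta(S)\subseteq V^{*}(G)$, and the marginality principle yields both $\theta(V(F))=V(G)$ and $[SV^{*}F]\subseteq\ker\theta$; hence $V(G)$ is a quotient of $V(F)/[SV^{*}F]$. I would then exploit the short exact sequence
$$1 \longrightarrow \frac{S\cap V(F)}{[SV^{*}F]} \longrightarrow \frac{V(F)}{[SV^{*}F]} \longrightarrow V(Q) \longrightarrow 1,$$
whose left term is exactly ${\mathcal V}M(Q)$ and whose right term is $V(Q)\le Q$. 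By hypothesis (ii) the left term is finite of order dividing a power of $|Q|$ (as $Q$ is finite), and $V(Q)$ is finite because $Q$ is; thus the middle group is finite with order dividing a power of $|Q|$. Since $V(G)$ is a quotient of the middle group, $V(G)$ is finite and $|V(G)|$ divides a power of $|Q|=|G/V^{*}(G)|$, which is precisely the Schur--Baer property.

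The \emph{main obstacle} I anticipate is not the assembly of the exact sequences but the verbal--marginal bookkeeping that feeds them: one must justify with care that $R/[RV^{*}F]\subseteq V^{*}(\bar F)$ and, dually, that $\theta$ kills every generator of $[SV^{*}F]$ while satisfying $\theta(V(F))=V(G)$. Each of these reduces to verifying, for all words $v\in V$ simultaneously, that inserting a marginal factor into any argument does not change the value, together with the routine but essential containment $[RV^{*}F]\subseteq V(F)$ used to identify $V(\bar F)$; making these precise uniformly over the word set is the step demanding the most attention.
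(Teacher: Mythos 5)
The paper does not prove this statement: Theorem 2.3 is quoted from Leedham-Green and McKay [11] with no argument supplied, so there is no in-paper proof to compare against. Your proposal is, as far as I can check, a correct and complete proof, and it is essentially the standard argument from the literature: for (i)$\Rightarrow$(ii) you pass to $\bar F=F/[RV^{*}F]$, use $\bar R\subseteq V^{*}(\bar F)$ to make $\bar F/V^{*}(\bar F)$ finite, and embed ${\mathcal V}M(G)$ in $V(\bar F)=V(F)/[RV^{*}F]$; for (ii)$\Rightarrow$(i) you present the finite group $Q=G/V^{*}(G)$ as $F/S$, lift to $\theta\colon F\to G$, and read off $V(G)$ as a quotient of the middle term of
$$1 \longrightarrow {\mathcal V}M(Q) \longrightarrow V(F)/[SV^{*}F] \longrightarrow V(Q) \longrightarrow 1.$$
The verbal--marginal verifications you flag (that $\bar R$ is marginal in $\bar F$, that $[RV^{*}F]\subseteq V(F)$, and that $\theta$ kills the generators of $[SV^{*}F]$ while $\theta(V(F))=V(G)$ because $\theta(F)V^{*}(G)=G$) are all routine consequences of the definition of the marginal subgroup and of $[RV^{*}F]$, and you have identified exactly the points that need them. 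I see no gap.
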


In the rest of this section we review some theorems required in the proofs of the main results of the article.
\begin{thm} (Jones [9]). Let $G $ be a
finite $ d $-generator group of order $p^{n}$. Then
$$p^{\frac{1}{2}d(d-1)} \leq |G'||M(G)| \leq p^{\frac{1}{2}n(n-1)}.$$ 
\end{thm}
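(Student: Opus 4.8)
The plan is to reduce the two-sided inequality to a statement about the single quantity $|F'/[R,F]|$ and then attack the upper and lower bounds separately. Writing $G\cong F/R$ with $F$ free of rank $d$ (the minimal number of generators of $G$), the description $M(G)\cong (R\cap F')/[R,F]$ yields a short exact sequence
\[
1\longrightarrow M(G)\longrightarrow F'/[R,F]\longrightarrow G'\longrightarrow 1,
\]
since $F'/(R\cap F')\cong F'R/R=G'$ and $[R,F]\le R\cap F'\le F'$. Hence $|G'|\,|M(G)|=|F'/[R,F]|$, and it suffices to squeeze this order between $p^{d(d-1)/2}$ and $p^{n(n-1)/2}$.

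For the upper bound I would induct on $n$. As $G$ is a nontrivial finite $p$-group, choose $Z\le Z(G)$ with $Z\cong\mathbf{Z}_p$ and set $Q=G/Z$, so $|Q|=p^{n-1}$. The Stallings--Stammbach five-term sequence together with the Ganea term for the central extension $1\to Z\to G\to Q\to 1$ gives the exact sequence
\[
Z\otimes Q^{ab}\longrightarrow M(G)\longrightarrow M(Q)\longrightarrow Z\cap G'\longrightarrow 1 .
\]
Reading off orders gives $|M(G)|\le |Z\otimes Q^{ab}|\cdot|M(Q)|/|Z\cap G'|$, and since $Z$ is central we have $G'/(Z\cap G')\cong (G/Z)'=Q'$, so $|G'|/|Z\cap G'|=|Q'|$. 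Multiplying through,
\[
|M(G)|\,|G'|\le |Z\otimes Q^{ab}|\cdot |M(Q)|\,|Q'| .
\]
Now $Z\otimes Q^{ab}\cong Q^{ab}/pQ^{ab}$ is elementary abelian of rank at most $n-1$, so $|Z\otimes Q^{ab}|\le p^{n-1}$, while by induction $|M(Q)|\,|Q'|\le p^{(n-1)(n-2)/2}$. Combining yields $|M(G)|\,|G'|\le p^{(n-1)+(n-1)(n-2)/2}=p^{n(n-1)/2}$, which closes the induction (the base case $n\le 1$ being trivial).

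For the lower bound I would push $F'/[R,F]$ down to the abelianization. With $S=F'R$ one has $G^{ab}\cong F/S$, and since $F'\le S$ the presentation gives $M(G^{ab})\cong F'/[F,S]$. As $R\le S$ forces $[R,F]\le [F,S]$, there is a natural surjection from $F'/[R,F]$ onto $F'/[F,S]\cong M(G^{ab})$, whence $|G'|\,|M(G)|=|F'/[R,F]|\ge |M(G^{ab})|$. Finally $G^{ab}$ is a finite abelian $p$-group requiring exactly $d$ generators, because $d=\dim_{\mathbf{F}_p}G/\Phi(G)=\dim_{\mathbf{F}_p}G^{ab}/pG^{ab}$; writing $G^{ab}\cong\mathbf{Z}_{p^{e_1}}\oplus\cdots\oplus\mathbf{Z}_{p^{e_d}}$, the known multiplier of a finite abelian group gives $|M(G^{ab})|=p^{\sum_{i<j}\min(e_i,e_j)}\ge p^{d(d-1)/2}$, the desired bound.

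The step I expect to be the main obstacle is the exactness of the Ganea sequence invoked for the upper bound, and in particular justifying the left-hand map $Z\otimes Q^{ab}\to M(G)$, since it is precisely this term that bounds $|M(G)|$ from above (the bare five-term sequence only controls the image of $M(G)\to M(Q)$). The remaining ingredients -- the surjection $F'/[R,F]\to F'/[F,S]$, the identity $G'/(Z\cap G')\cong Q'$, and the explicit Schur multiplier of a finite abelian $p$-group -- are routine once this sequence is available.
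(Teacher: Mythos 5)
The paper does not prove this statement at all: it is Theorem 2.4, quoted as a known preliminary from Jones [9], so there is no in-paper argument to compare against. Judged on its own, your reconstruction is essentially correct and follows the classical line (it is close in spirit to Jones's and Wiegold's original arguments): the identity $|G'|\,|M(G)|=|F'/[R,F]|$ via Hopf's formula, the surjection $F'/[R,F]\twoheadrightarrow F'/[F,F'R]\cong M(G^{ab})$ for the lower bound together with $|M(G^{ab})|=p^{\sum_{i<j}\min(e_i,e_j)}\ge p^{d(d-1)/2}$, and an induction on $n$ through a central subgroup of order $p$ for the upper bound, with the arithmetic $(n-1)+\tfrac{(n-1)(n-2)}{2}=\tfrac{n(n-1)}{2}$ checking out. (One should read $d$ as the \emph{minimal} number of generators, as you implicitly do; otherwise the lower bound is false, e.g.\ for $\mathbf{Z}_p$ viewed as a $2$-generator group.)

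The one point you rightly flag as delicate is the Ganea term, and indeed your sequence is misstated in general: for a central extension $1\to N\to G\to Q\to 1$ the exact sequence is
$$N\otimes G^{ab}\longrightarrow M(G)\longrightarrow M(Q)\longrightarrow N\cap G'\longrightarrow 1,$$
with $G^{ab}$, not $Q^{ab}$ (take $G=\mathbf{Z}_p\oplus\mathbf{Z}_p$ and $N=G$: then $N\otimes Q^{ab}=0$ but $M(G)=\mathbf{Z}_p$). Fortunately your application survives, because for \emph{cyclic} central $N$ the Ganea map does factor through $N\otimes Q^{ab}$: writing $G=F/R$ and $T$ for the preimage of $N$, one has $T=\langle t\rangle R$, and the bilinearity of $[\,\cdot\,,\cdot\,]$ modulo $[R,F]$ gives $[T,T]\le[R,F]$, so the kernel $[T,F]/[R,F]$ of $M(G)\to M(Q)$ is an image of $N\otimes Q^{ab}$ and has order at most $p^{\,d(Q)}\le p^{\,n-1}$. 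With that justification inserted (or with the $G^{ab}$ form plus the observation that $[T,F]/[R,F]$ is generated by the $d$ classes $[t,x_i]$ of exponent $p$, one of which can be made trivial by taking $x_1=t$ when $Z\not\le\Phi(G)$, while $d\le n-1$ when $Z\le\Phi(G)$), the proof is complete.
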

\begin{thm} (Berkovich [2]). Let $G $
be a finite group of order $p^{n}$. Then $|M(G)|=
p^{\frac{1}{2}n(n-1)}$ if and only if  $G$ is
an elementary abelian $p$-group.
\end{thm}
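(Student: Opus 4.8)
The plan is to derive both implications from the theorem of Jones stated above, combined with the classical formula for the Schur multiplier of a finite abelian group, so that the only substantive point becomes a sharp combinatorial inequality.

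For the easy implication, I would start from $G\cong\mathbf{Z}_p^{(n)}$: it is minimally generated by $d=n$ elements and has $G'=1$, so the two bounds of Jones's theorem squeeze $|M(G)|$ from both sides, namely $p^{d(d-1)/2}\le|G'|\,|M(G)|=|M(G)|\le p^{n(n-1)/2}$ with $d=n$, which forces $|M(G)|=p^{n(n-1)/2}$.

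For the converse, first I would use the upper bound $|G'|\,|M(G)|\le p^{n(n-1)/2}$ of Jones's theorem: since $|M(G)|=p^{n(n-1)/2}$ by hypothesis, this forces $|G'|=1$, so $G$ is abelian, say $G\cong\bigoplus_{i=1}^{d}\mathbf{Z}_{p^{\alpha_i}}$ with $\alpha_i\ge1$ and $\sum_i\alpha_i=n$. Then by the classical formula $M(G)\cong\bigoplus_{i<j}\mathbf{Z}_{p^{\min(\alpha_i,\alpha_j)}}$, so I would be reduced to proving that $E:=\sum_{i<j}\min(\alpha_i,\alpha_j)\le\frac{n(n-1)}{2}$ with equality exactly when every $\alpha_i=1$. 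To do this I would count by levels: set $r_\ell=\#\{i:\alpha_i\ge\ell\}$ and use that $\min(\alpha_i,\alpha_j)$ is the number of levels $\ell$ with both $\alpha_i\ge\ell$ and $\alpha_j\ge\ell$; summing over pairs gives $E=\sum_{\ell\ge1}\frac{r_\ell(r_\ell-1)}{2}$ while $\sum_\ell r_\ell=\sum_i\alpha_i=n$. Then the strict superadditivity $\frac{a(a-1)}{2}+\frac{b(b-1)}{2}=\frac{(a+b)(a+b-1)}{2}-ab$ for $a,b\ge0$ yields, by induction, $E\le\frac{n(n-1)}{2}$ with deficit exactly $\sum_{\ell<\ell'}r_\ell r_{\ell'}\ge0$.

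The hard part will be the equality analysis, not the bound itself. The deficit vanishes iff at most one $r_\ell$ is nonzero, and since $r_1\ge r_2\ge\cdots\ge0$ this means $r_1=n$ and $r_\ell=0$ for $\ell\ge2$, i.e.\ all $\alpha_i=1$ and $d=n$; hence equality holds only for $\mathbf{Z}_p^{(n)}$. I expect this sharp superadditivity step to be the crux, precisely because the analogous counting bound can be tight in unexpected ways --- indeed the introduction records that the earlier argument in [13] broke down over exactly such an equality/monotonicity subtlety --- so the care must go into showing the inequality is \emph{strict} for every abelian $p$-group that is not elementary abelian.
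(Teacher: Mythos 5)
Your proposal is correct, but there is nothing in the paper to compare it against: Theorem 2.5 is imported from Berkovich [2] as a black box, and the paper supplies no proof of it. What you have written is a legitimate self-contained derivation from Jones's inequalities (Theorem 2.4) together with Schur's classical formula $M\bigl(\oplus_i\mathbf{Z}_{p^{\alpha_i}}\bigr)\cong\oplus_{i<j}\mathbf{Z}_{p^{\min(\alpha_i,\alpha_j)}}$ (the $c=1$ case of Theorem 2.7). Both halves check out: the reduction to the abelian case via $|G'|\,|M(G)|\le p^{n(n-1)/2}$ is exactly the right use of Jones, and your level-counting identity $E=\frac{n(n-1)}{2}-\sum_{\ell<\ell'}r_\ell r_{\ell'}$ with $r_\ell$ the number of indices $i$ with $\alpha_i\ge\ell$ turns the equality analysis into an exact formula with a visibly nonnegative deficit, so strictness for every non-elementary abelian $p$-group is immediate (if some $\alpha_i\ge2$ then $r_1 r_2>0$). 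The closest thing in the paper is its corrected proof of the generalization, Theorem 2.9: there the authors write $b_n=\chi_{c+1}(n)$ as a telescoping sum of increments $b_i-b_{i-1}$ and argue by comparing the number of terms and the size of individual terms on the two sides of the resulting equation. Your argument is sharper for $c=1$ because the identity $\frac{a(a-1)}{2}+\frac{b(b-1)}{2}=\frac{(a+b)(a+b-1)}{2}-ab$ gives the deficit in closed form; the trade-off is that this identity is special to the quadratic $\chi_2(i)=\frac{i(i-1)}{2}$ and does not extend to $\chi_{c+1}$ for $c\ge2$, which is why the paper's monotonicity-of-increments route is the one that generalizes to the polynilpotent setting.
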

\begin{thm} (Moghaddam [15,16]). Let
${\mathcal V}$ be the variety of polynilpotent groups of a given
class row. Let $ G $ be a finite $ d $-generator group of order
$p^n$. Then
$$|{\mathcal V}M( \textbf{Z}_{p}^{(d)})| \leq |{\mathcal
V}M(G)||V(G)|\leq |{\mathcal V}M(\textbf{Z}_{p}^{(n)})|.$$
\end{thm}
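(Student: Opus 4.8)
The plan is to reduce both inequalities to comparisons of $p$-power exponents and then to propagate them along a central series by means of an exact sequence. First I would record that the variety $\mathcal{N}_{c_1,\dots,c_s}$ is a Schur--Baer variety: the variety $\mathcal{A}$ of abelian groups has this property (Schur), and since the polynilpotent word is built from the commutator word by repeated formation of $[u,v]$, Baer's theorem yields the Schur--Baer property by induction on the length $s$ of the class row. Consequently, by Theorem~2.3, for the finite $p$-group $G$ both ${\mathcal V}M(G)$ and $V(G)$ are finite $p$-groups, so $|{\mathcal V}M(G)|$, $|V(G)|$, $|{\mathcal V}M(\textbf{Z}_p^{(d)})|$ and $|{\mathcal V}M(\textbf{Z}_p^{(n)})|$ are all powers of $p$ and the asserted bounds become comparisons of exponents.

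The engine of the argument is the generalized five-term exact sequence attached to a normal subgroup $N$ of $G$ with quotient $Q = G/N$:
$${\mathcal V}M(G) \longrightarrow {\mathcal V}M(Q) \longrightarrow \frac{N}{[N\,V^{*}G]} \longrightarrow \frac{G}{V(G)} \longrightarrow \frac{Q}{V(Q)} \longrightarrow 1.$$
I would apply this inductively on $n = \log_p|G|$, each time taking $N = \langle z\rangle \cong \textbf{Z}_p$ with $z$ a central element of order $p$, so that $[N\,V^{*}G]=1$ and the third term is exactly $\textbf{Z}_p$. Tracking orders through the sequence then shows that replacing $G$ by $Q$ alters the product $|{\mathcal V}M(G)|\,|V(G)|$ by a controlled $p$-power factor bounded by $|N|$, which is precisely what is needed to compare $G$ with an elementary abelian group of the same order by peeling off one central factor at a time.

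For the upper bound I would induct downward from $G$ to the elementary abelian group $\textbf{Z}_p^{(n)}$ of the same order, using that $V(\textbf{Z}_p^{(n)})=1$ (a polynilpotent word of positive class vanishes on an abelian group), so that the target reduces to a pure Baer invariant and plays the role of the maximum. For the lower bound I would instead exploit the canonical epimorphism from $G$ onto its Frattini quotient $G/\Phi(G)\cong \textbf{Z}_p^{(d)}$, feeding $N=\Phi(G)$ into the five-term sequence so that ${\mathcal V}M(\textbf{Z}_p^{(d)})$ appears as ${\mathcal V}M(Q)$ and is dominated by the data of $G$. The main obstacle I anticipate is the simultaneous bookkeeping of the two quantities $|{\mathcal V}M(\cdot)|$ and $|V(\cdot)|$: the exact sequence by itself controls only the multiplier term in one direction, and to obtain monotonicity of the \emph{product} I must pair the change in the kernel and cokernel of ${\mathcal V}M(G)\to{\mathcal V}M(Q)$ with the change in the verbal subgroup encoded in the right-hand terms, verifying that the two contributions combine to yield exactly the factor $|N|=p$ at each step.
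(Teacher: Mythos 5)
The paper quotes this result from Moghaddam [15,16] without proof, so there is no internal argument to compare against; judged on its own terms, your proposal has a correct half and a broken half. The lower bound is fine in outline: applying the generalized five-term sequence to $1\to\Phi(G)\to G\to G/\Phi(G)\to 1$ (you need the version valid for an arbitrary normal subgroup, since $\Phi(G)$ need not be marginal) and using $V(\textbf{Z}_p^{(d)})=1$ gives $|{\mathcal V}M(\textbf{Z}_p^{(d)})|\leq|\mathrm{im}\,\theta|\cdot|(\Phi(G)\cap V(G))/[\Phi(G)V^{*}G]|\leq|{\mathcal V}M(G)|\,|V(G)|$, exactly as you intend.

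The upper bound argument, however, has a genuine gap. First, the five-term sequence is exact starting \emph{at} ${\mathcal V}M(G)$, so it only bounds $|{\mathcal V}M(Q)|$ from above by $|{\mathcal V}M(G)|\cdot|N|$; it gives no control on $\ker({\mathcal V}M(G)\to{\mathcal V}M(Q))$, which is what you would need in order to bound $|{\mathcal V}M(G)|$ by data of the quotient $Q=G/N$. That direction requires an additional (Ganea-type) term, or a direct free-presentation computation of $V(F)/[RV^{*}F]$. Second, and decisively, the claimed per-step correction factor ``bounded by $|N|=p$'' is quantitatively false: iterating it from $\textbf{Z}_p$ up to $G$ would yield $|{\mathcal V}M(G)|\,|V(G)|\leq p^{\,n-1}$, which already fails for $G=\textbf{Z}_p^{(n)}$ itself, where the product equals $p^{\beta_n}$ with $\beta_n\geq\chi_{2}(n)=n(n-1)/2$ even in the simplest (abelian-variety) case. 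In Jones's argument for the Schur multiplier the correct correction term when peeling off a central $N\cong\textbf{Z}_p$ is $|N\otimes G/NG'|$, which can be as large as $p^{\,n-1}$ at a single step, and for a general polynilpotent word it is larger still; without identifying and bounding that term your induction cannot accumulate to $|{\mathcal V}M(\textbf{Z}_p^{(n)})|$. Finally, note that peeling off central factors moves you from $G$ down toward the trivial group, not across to $\textbf{Z}_p^{(n)}$, so even a repaired induction must be matched at the end against an independent computation of $|{\mathcal V}M(\textbf{Z}_p^{(n)})|$ (Theorem 2.11 of the paper).
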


We recall that the number of basic commutators of weight $c$ on
$n$ generators, denoted by $\chi _{c}(n)$, is determined by Witt
formula (see [5]).
\begin{thm} (Moghaddam and Mashayekhy [14]).
Let $G \cong \textbf{Z}_{n_1}\oplus
\textbf{Z}_{n_2}\oplus ...\oplus \textbf{Z}_{n_k}$ be a finite
abelian groups, where $n_{i+1}$ divides $n_i$ for all $1\leq i\leq
k-1$. Then for all $c \geq 1$, the $ c $-nilpotent multiplier of
$ G $ is
$${\mathcal N}_cM(G)= \textbf{Z}_{n_2}^{(b_2)} \oplus \textbf{Z}_{n_3}^{(b_3-b_2)} \oplus ... \oplus
\textbf{Z}_{n_k}^{(b_{k}-b_{k-1})},$$
where $b_i=\chi_{c+1}(i)$.
\end{thm}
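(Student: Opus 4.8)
The plan is to realize the $c$-nilpotent multiplier inside the free Lie ring associated with $F$ and to reduce the whole computation to elementary linear algebra over $\mathbf{Z}$. Write $G\cong F/R$ with $F$ free on $x_1,\dots,x_k$, so that $R$ is generated, as a normal subgroup, by $F'$ together with the elements $x_i^{n_i}$. Since $G$ is abelian we have $F'\le R$, hence $\gamma_{c+1}(F)\le F'\le R$ and therefore $R\cap\gamma_{c+1}(F)=\gamma_{c+1}(F)$; moreover $\gamma_{c+2}(F)=[\gamma_2(F),{}_cF]\le[R,{}_cF]\le\gamma_{c+1}(F)$. Consequently
$$\mathcal{N}_cM(G)\cong\frac{\gamma_{c+1}(F)}{[R,{}_cF]}\cong\frac{L_{c+1}}{S},$$
where $L_{c+1}=\gamma_{c+1}(F)/\gamma_{c+2}(F)$ is free abelian on the basic commutators of weight $c+1$ (so of rank $\chi_{c+1}(k)$ by the Witt formula) and $S$ is the image of $[R,{}_cF]$ in $L_{c+1}$. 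The first task is to pin down $S$.

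Next I would compute $S$ by commutator calculus. Modulo $\gamma_{c+2}(F)$ the contribution of the $F'$-part of $R$ dies, since $[F',{}_cF]=\gamma_{c+2}(F)$, and a commutator is linear in each entry modulo higher weight; hence $S$ is generated by the images of $[x_i^{n_i},x_{j_1},\dots,x_{j_c}]\equiv n_i[x_i,x_{j_1},\dots,x_{j_c}]\ (\mathrm{mod}\ \gamma_{c+2}(F))$. Passing to the free Lie ring $L=\bigoplus_{d\ge 1}L_d$ on $x_1,\dots,x_k$ (with $L_d=\gamma_d(F)/\gamma_{d+1}(F)$), this says precisely that $S=\sum_{i=1}^k n_i\,(x_i)_{c+1}$, where $(x_i)$ is the ideal of $L$ generated by $x_i$ and the scalar factors out because the bracket is $\mathbf{Z}$-bilinear. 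The key structural point is then that $(x_i)_{c+1}$ is a coordinate subgroup of $L_{c+1}$: using the Lie retraction $\rho_i\colon L\to L$ sending $x_i\mapsto 0$ and fixing the other generators, whose kernel is exactly $(x_i)$, one checks that $\rho_i$ annihilates every basic commutator involving $x_i$ and fixes every basic commutator not involving $x_i$ (here one invokes that the basic commutators on a sub-alphabet are exactly those basic commutators on $x_1,\dots,x_k$ avoiding the missing letter). Hence $(x_i)_{c+1}$ is freely spanned by the basic commutators of weight $c+1$ that involve $x_i$, and this holds simultaneously for all $i$ relative to a single Hall basis.

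With this in hand the computation becomes diagonal. For a basic commutator $w$ let $\mu(w)$ be the largest index of a generator occurring in $w$; then the coefficient of $w$ in $S=\sum_i n_i(x_i)_{c+1}$ ranges over $\sum_{i:\,x_i\in w} n_i\mathbf{Z}=\gcd\{n_i:x_i\in w\}\,\mathbf{Z}=n_{\mu(w)}\mathbf{Z}$, the last equality using the divisibility hypothesis $n_{i+1}\mid n_i$ (the largest index gives the smallest, hence dividing, order). Since the summands $n_i(x_i)_{c+1}$ are supported on coordinate subgroups, $S=\bigoplus_w n_{\mu(w)}\mathbf{Z}\,w$, whence $L_{c+1}/S\cong\bigoplus_w \mathbf{Z}_{n_{\mu(w)}}$. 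Finally I would count: the number of basic commutators of weight $c+1$ with $\mu(w)=j$ is $\chi_{c+1}(j)-\chi_{c+1}(j-1)=b_j-b_{j-1}$, and the $j=1$ term vanishes because $b_1=\chi_{c+1}(1)=0$; collecting terms yields $\mathcal{N}_cM(G)\cong\bigoplus_{j=2}^k\mathbf{Z}_{n_j}^{(b_j-b_{j-1})}$. I expect the main obstacle to be the commutator-calculus identification of $S$ with $\sum_i n_i(x_i)_{c+1}$ — justifying multilinearity modulo $\gamma_{c+2}(F)$ and that conjugation and the derived part leave no residue — together with the standard but load-bearing fact that a single Hall basis is compatible with every ideal $(x_i)$ at once.
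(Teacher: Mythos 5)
The paper does not prove this statement: Theorem 2.7 is quoted as a preliminary from Moghaddam--Mashayekhy [14], so there is no internal proof to compare against. Your argument is correct and is, in substance, the standard free-presentation/basic-commutator proof used in [14] and in the related computations [8,12] cited here: reduce $\mathcal{N}_cM(G)$ to $\gamma_{c+1}(F)/[R,{}_cF]$ (legitimate since $\gamma_{c+1}(F)\le F'\le R$ and $\gamma_{c+2}(F)=[F',{}_cF]\le[R,{}_cF]$), pass to the degree-$(c+1)$ component of the free Lie ring, identify the image of $[R,{}_cF]$ as $\sum_i n_i(x_i)_{c+1}$, and diagonalize against a Hall basis. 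The two steps you flag as load-bearing are indeed the only ones needing citation and both are classical: that the ideal $(x_i)$ is spanned in each degree by left-normed brackets beginning with $x_i$ (repeated Jacobi), and that a Hall basis is compatible with all sub-alphabets simultaneously, so $(x_i)_{c+1}$ is exactly the span of the weight-$(c+1)$ basic commutators involving $x_i$. With $b_1=\chi_{c+1}(1)=0$ absorbing the $j=1$ term, your final count matches the stated decomposition, so the proposal stands as a complete and correct proof of the quoted theorem.
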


The following result is an interesting
consequence of Theorems 2.6 and 2.7.
\begin{cor} Let $G$ be a finite
$d$-generator $p$-group of order $p^n$, then
$$ p^{\chi_{c+1}(d)}\leq |{\mathcal N}_cM(G)||\gamma_{c+1}(G)|\leq
p^{\chi_{c+1}(n)}.$$
\end{cor}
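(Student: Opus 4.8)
The plan is to read Corollary 2.8 as the specialization of Theorem 2.6 to the variety $\mathcal{N}_c$ of nilpotent groups of class at most $c$---that is, the polynilpotent variety whose class row is the single entry $(c)$, obtained by taking $s=1$ and $c_1=c$ in the general definition---and then to evaluate the two extreme terms by means of Theorem 2.7. Under this identification one has $\mathcal{V}M(G)=\mathcal{N}_cM(G)$ and $V(G)=\gamma_{c+1}(G)$, so Theorem 2.6 applied to the finite $d$-generator $p$-group $G$ of order $p^n$ gives at once
$$|\mathcal{N}_cM(\textbf{Z}_p^{(d)})|\leq|\mathcal{N}_cM(G)||\gamma_{c+1}(G)|\leq|\mathcal{N}_cM(\textbf{Z}_p^{(n)})|.$$
It therefore remains only to compute the order of the $c$-nilpotent multiplier of an elementary abelian $p$-group of prescribed rank.

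For this I would invoke Theorem 2.7 with $G=\textbf{Z}_p^{(r)}$, whose invariant factors are all equal to $p$; the divisibility hypothesis $n_{i+1}\mid n_i$ then holds trivially and $k=r$. Writing $b_i=\chi_{c+1}(i)$, Theorem 2.7 yields
$$\mathcal{N}_cM(\textbf{Z}_p^{(r)})=\textbf{Z}_p^{(b_2)}\oplus\textbf{Z}_p^{(b_3-b_2)}\oplus\cdots\oplus\textbf{Z}_p^{(b_r-b_{r-1})}.$$
Since every summand is a copy of $\textbf{Z}_p$, the order of this group is $p$ raised to its total rank, and the rank is the telescoping sum $b_2+(b_3-b_2)+\cdots+(b_r-b_{r-1})=b_r=\chi_{c+1}(r)$. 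Hence $|\mathcal{N}_cM(\textbf{Z}_p^{(r)})|=p^{\chi_{c+1}(r)}$.

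Substituting $r=d$ and $r=n$ into the displayed inequality reproduces exactly the asserted bounds $p^{\chi_{c+1}(d)}$ and $p^{\chi_{c+1}(n)}$, completing the argument. As each step is either a direct citation of Theorems 2.6 and 2.7 or an elementary cancellation, there is no substantive obstacle; the only point demanding a little care is the telescoping bookkeeping, which is precisely what makes the total rank collapse to the single Witt value $\chi_{c+1}(r)$ rather than remaining a sum of successive differences.
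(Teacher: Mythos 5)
Your proposal is correct and follows exactly the route the paper intends: the corollary is stated there as a direct consequence of Theorems 2.6 and 2.7, obtained by specializing Theorem 2.6 to the variety $\mathcal{N}_c$ and evaluating $|\mathcal{N}_cM(\textbf{Z}_p^{(r)})|=p^{\chi_{c+1}(r)}$ via the telescoping sum from Theorem 2.7. No gaps.
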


The following theorem is a generalization of Theorem 2.5.
\begin{thm} (Mashayekhy, Sanati [13]).
Let $G$ be an abelian group of order $p^n$. Then
${\mathcal N}_cM(G)=p^{\chi_{c+1}(n)}$ if and only if $G$ is an
elementary abelian $p$-group.
\end{thm}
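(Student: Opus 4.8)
The plan is to read everything off the explicit structure of the $c$-nilpotent multiplier of a finite abelian $p$-group furnished by Theorem 2.7, and then to reduce the extremal claim to a single clean inequality for the Witt function. Write $G\cong \mathbf{Z}_{p^{\alpha_1}}\oplus\cdots\oplus\mathbf{Z}_{p^{\alpha_k}}$ with $\alpha_1\ge\cdots\ge\alpha_k\ge 1$ and $n=\sum_{i=1}^k\alpha_i$, so that $G$ is elementary abelian exactly when $k=n$, i.e. every $\alpha_i=1$. Putting $b_i=\chi_{c+1}(i)$ and noting $b_1=\chi_{c+1}(1)=0$ (there are no basic commutators of weight $c+1\ge 2$ on a single letter), Theorem 2.7 gives
$$|{\mathcal N}_cM(G)|=\prod_{i=2}^k p^{\alpha_i(b_i-b_{i-1})}=p^{e},\qquad e=\sum_{i=2}^k\alpha_i(b_i-b_{i-1}).$$
The \emph{if} direction is then immediate: when $k=n$ and all $\alpha_i=1$ the exponent telescopes to $e=\sum_{i=2}^n(b_i-b_{i-1})=b_n=\chi_{c+1}(n)$.

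For the converse I would first rewrite $e$ symmetrically. Let $(\beta_1,\dots,\beta_{\alpha_1})$ be the conjugate partition of $(\alpha_1,\dots,\alpha_k)$, so that $\beta_j=\#\{i:\alpha_i\ge j\}\ge 1$ and $\sum_{j}\beta_j=n$. Using $\alpha_i=\#\{j:\beta_j\ge i\}$ and interchanging the order of summation, the inner telescoping collapses and yields
$$e=\sum_{j=1}^{\alpha_1}\sum_{i=2}^{\beta_j}(b_i-b_{i-1})=\sum_{j=1}^{\alpha_1}b_{\beta_j}=\sum_{j=1}^{\alpha_1}\chi_{c+1}(\beta_j).$$
Thus the exponent of $|{\mathcal N}_cM(G)|$ is exactly a sum of $\chi_{c+1}$-values over a partition of $n$ into $\alpha_1$ positive parts, and $G$ is elementary abelian precisely when $\alpha_1=1$, i.e. when that partition has a single part.

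The crux is the strict superadditivity of the Witt function: for all integers $a,b\ge 1$ and $c\ge 1$,
$$\chi_{c+1}(a+b)>\chi_{c+1}(a)+\chi_{c+1}(b).$$
I would prove this by counting basic commutators directly. Order a set of $a+b$ letters so that a block $A$ of size $a$ precedes a block $B$ of size $b$. Among the $\chi_{c+1}(a+b)$ basic commutators of weight $c+1$, those involving only letters of $A$ are exactly the $\chi_{c+1}(a)$ basic commutators on $A$, and likewise $\chi_{c+1}(b)$ involve only $B$; these two families are disjoint. Since $c+1\ge 2$, there is at least one further basic commutator of weight $c+1$ using letters from both blocks (for instance a left-normed commutator in one letter of each block), which forces the inequality to be strict. (Equivalently, one may argue from Witt's formula, whose leading term $d^{c+1}/(c+1)$ is strictly superadditive and dominates the lower-order terms.) Granting this, a short induction on the number of parts gives $\sum_{j=1}^{\alpha_1}\chi_{c+1}(\beta_j)\le\chi_{c+1}(n)$, with equality if and only if there is a single part, i.e. $\alpha_1=1$. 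Combining this with the formula for $e$ shows that $|{\mathcal N}_cM(G)|=p^{\chi_{c+1}(n)}$ can hold only for elementary abelian $G$, which completes the proof.

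I expect the main obstacle to be establishing the strict superadditivity of $\chi_{c+1}$ in a self-contained way; this is precisely the point at which the original argument in [13] failed, since it rested on the false inequality $i\chi_{c+1}(i)<\chi_{c+1}(i+1)$. The conjugate-partition rewriting of $e$ is the device that isolates the correct inequality and lets superadditivity carry the entire argument, so once that inequality is secured the remaining steps are routine bookkeeping.
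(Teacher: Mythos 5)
Your proof is correct, but your converse is organized around a different key inequality than the paper's. After the common reduction via Theorem 2.7 to the numerical identity $\chi_{c+1}(n)=\sum_{i\ge 2}\alpha_i(b_i-b_{i-1})$, the paper telescopes $b_n$, cancels the terms $b_2,(b_3-b_2),\dots,(b_d-b_{d-1})$ common to both sides, and derives a contradiction for $\alpha_1\ge 2$ by counting terms: the left side of the resulting equality has $n-d$ summands, the right side only $n-\alpha_1-d+1$, and each left summand dominates each right summand because $b_j-b_{j-1}\ge b_i-b_{i-1}$ for $j\ge i$ (the marginal difference being the number of basic commutators of weight $c+1$ on $i$ letters in which $x_i$ occurs). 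You instead pass to the conjugate partition to rewrite the exponent as $\sum_{j=1}^{\alpha_1}\chi_{c+1}(\beta_j)$ and invoke strict superadditivity $\chi_{c+1}(a+b)>\chi_{c+1}(a)+\chi_{c+1}(b)$. Both inequalities ultimately rest on the same combinatorial fact (basic commutators supported on a sub-alphabet are exactly the basic commutators on that sub-alphabet), so the arguments are close cousins, and both correctly avoid the false inequality $i\chi_{c+1}(i)<\chi_{c+1}(i+1)$ that broke the original proof in [13]. Your packaging is arguably cleaner and more conceptual for this theorem, since it isolates a single self-contained inequality about the Witt function; the paper's marginal-difference formulation has the practical advantage that it transfers verbatim to the generalizations in Section 3 (Theorems 3.1--3.3), where the exponents $d_i$ are iterated compositions $\chi_{c_s+1}(\cdots\chi_{c_1+j}(i)\cdots)$ and monotonicity of $d_i-d_{i-1}$ is still visible by the same counting, whereas superadditivity would not obviously survive the composition. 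One small point to make explicit if you write this up: the disjointness and completeness of your block decomposition of the basic commutators of weight $c+1$ on $a+b$ letters uses that every such commutator has weight at least $2$ and that the basic commutators involving only letters from a fixed block are precisely the basic commutators on that block; you use this implicitly and it is the same standard fact the paper relies on.
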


Let $1
\rightarrow N \rightarrow G \rightarrow Q \rightarrow 1$ be a
${\cal V}$-central extension, where ${\cal V}$ is any variety of
groups, that is, the above sequence is exact and $N$ is
contained in the marginal subgroup of $G$, $V^{*}(G)$. Then the
following five-term exact sequence exists (see Fr\"{o}hlich [3]):
 $${\mathcal V}M(G)  \stackrel {\theta}{\rightarrow} {\mathcal V}M(Q) \rightarrow N \rightarrow G/V(G) \rightarrow Q/V(Q) \rightarrow
 1.$$

The nonabelian version of Theorem 2.9 is as follows.
\begin{thm} (Mashayekhy, Sanati [13]).
Let $G$ be a finite $p$-group of order $p^n$. If
${\mathcal N}_cM(G)=p^{\chi_{c+1}(n)}$,
then\\
(i) There is an epimorphism $ {\mathcal N}_cM(G) \stackrel
{\theta}{\rightarrow} {\mathcal N}_cM(G/G')$ which is obtained
from the
Fr\"{o}hlich sequence.\\
(ii) If $ker (\theta )=1$, then $G$ is an elementary abelian
$p$-group.
\end{thm}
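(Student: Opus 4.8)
The plan is to let the extremal hypothesis do its work through Corollary 2.8. Since $|{\mathcal N}_cM(G)||\gamma_{c+1}(G)| \le p^{\chi_{c+1}(n)}$ and the first factor already equals $p^{\chi_{c+1}(n)}$, we must have $\gamma_{c+1}(G) = 1$; that is, $G$ is nilpotent of class at most $c$. Consequently the marginal subgroup ${\mathcal N}_c^*(G)$, which is the $c$th term $Z_c(G)$ of the upper central series, equals $G$, so in particular $G' \subseteq {\mathcal N}_c^*(G)$ and the natural short exact sequence $1 \rightarrow G' \rightarrow G \rightarrow G/G' \rightarrow 1$ is an ${\mathcal N}_c$-central extension. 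This is precisely the hypothesis required to apply the Fr\"{o}hlich five-term exact sequence.

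For (i) I would feed this central extension into the Fr\"{o}hlich sequence with $N = G'$, $Q = G/G'$ and $V = \gamma_{c+1}$, obtaining
$${\mathcal N}_cM(G) \stackrel{\theta}{\rightarrow} {\mathcal N}_cM(G/G') \rightarrow G' \stackrel{\mu}{\rightarrow} G/\gamma_{c+1}(G) \rightarrow (G/G')/\gamma_{c+1}(G/G') \rightarrow 1.$$
Because $\gamma_{c+1}(G)=1$, the arrow $\mu$ is simply the inclusion $G' \hookrightarrow G$ and hence injective, so $\ker\mu = 1$. Exactness at $G'$ then forces the preceding map ${\mathcal N}_cM(G/G') \rightarrow G'$ to be zero, and exactness at ${\mathcal N}_cM(G/G')$ shows that the image of $\theta$ is all of ${\mathcal N}_cM(G/G')$, i.e. $\theta$ is onto. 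This identifies the asserted epimorphism.

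For (ii) I add the hypothesis $\ker\theta = 1$, so $\theta$ is an isomorphism and $|{\mathcal N}_cM(G)| = |{\mathcal N}_cM(G/G')|$. Write $|G/G'| = p^{n'}$ with $n' \le n$. As $G/G'$ is abelian, Corollary 2.8 gives $|{\mathcal N}_cM(G/G')| \le p^{\chi_{c+1}(n')}$, whence $p^{\chi_{c+1}(n)} \le p^{\chi_{c+1}(n')} \le p^{\chi_{c+1}(n)}$ and therefore $\chi_{c+1}(n') = \chi_{c+1}(n)$. Since $\chi_{c+1}$ is strictly increasing in the number of letters (Witt formula), this yields $n' = n$, so $G' = 1$ and $G$ is an abelian group of order $p^n$ whose $c$-nilpotent multiplier has maximal order. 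Theorem 2.9 now concludes that $G$ is elementary abelian.

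I expect the decisive step to be the first one --- observing that the maximality of $|{\mathcal N}_cM(G)|$ collapses Corollary 2.8 to $\gamma_{c+1}(G) = 1$ --- because class at most $c$ is exactly what makes the extension ${\mathcal N}_c$-central and, through $\gamma_{c+1}(G)=1$, forces the injectivity of $\mu$ that delivers surjectivity of $\theta$. A minor but genuine point is to justify the strict monotonicity implication $\chi_{c+1}(n') = \chi_{c+1}(n) \Rightarrow n' = n$ directly from the Witt formula, so as not to rely on any incorrect inequality between values of $\chi_{c+1}$.
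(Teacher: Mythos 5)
Your proposal is correct and follows essentially the same route the paper takes: the paper does not reprove Theorem 2.10 itself (it is quoted from [13]), but its proof of the generalization, Theorem 3.5, is exactly your argument --- maximality of the multiplier plus Corollary 2.8/2.12 forces $V(G)=\gamma_{c+1}(G)=1$, making $1\to G'\to G\to G/G'\to 1$ a $\mathcal{V}$-central extension, whence the Fr\"{o}hlich sequence and injectivity of $G'\to G/V(G)$ give surjectivity of $\theta$, and in (ii) the bound $p^{\chi_{c+1}(n')}<p^{\chi_{c+1}(n)}$ for $n'<n$ forces $G'=1$ before invoking the abelian case. Your explicit appeal to strict monotonicity of $\chi_{c+1}$ via the Witt formula is a sensible precaution given the paper's own warning about faulty $\chi$-inequalities, but it is the same step the paper performs implicitly.
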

\begin{thm} (Mashayekhy, Parvizi [12]).
Let $$G \cong \textbf{Z}^{(m)} \oplus
\textbf{Z}_{n_1}\oplus \textbf{Z}_{n_2}\oplus ...\oplus
\textbf{Z}_{n_k}$$ be a finitely generated abelian group, where
$n_{i+1}$ divides $n_i$ for all $1\leq i\leq k-1$. Then $${\mathcal
N}_{c_1,c_2,...,c_s} M(G)\cong \textbf{Z}^{(\beta_m)} \oplus
\textbf{Z}_{n_1}^{(\beta_{m+1}-\beta_m)} \oplus ... \oplus
\textbf{Z}_{n_k}^{(\beta_{m+k}-\beta_{m+k-1})},$$ where $\beta_i=\chi
_{c_s+1}( \chi _{c_{s-1}+1}(...(\chi _{c_1+1}(i))...)) $ for all
$m \leq i \leq m+k$.
\end{thm}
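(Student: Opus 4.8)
The plan is to pass to a free presentation, reduce to a verbal-subgroup quotient, and then split the problem into a free-rank count (handled by induction on the length $s$ of the class row) and a torsion-propagation step modelled on the computation behind Theorem 2.7. Write $G\cong F/R$ with $F$ free of rank $d=m+k$ on $x_1,\ldots,x_d$ and $R=\langle F',\,x_{m+1}^{n_1},\ldots,x_{m+k}^{n_k}\rangle$, so that $x_1,\ldots,x_m$ map onto the free generators and $x_{m+1},\ldots,x_{m+k}$ onto the torsion generators. Put $W=\gamma_{c_s+1}\circ\cdots\circ\gamma_{c_1+1}$. Every polynilpotent word lies in $\gamma_{c_1+1}(F)\subseteq F'\subseteq R$, so $R\cap W(F)=W(F)$, and Hekster's identity for $[R\,{\mathcal N}^{*}_{c_1,\ldots,c_s}F]$ gives
$${\mathcal N}_{c_1,\ldots,c_s}M(G)=\frac{W(F)}{[R,\ _{c_1}F,\ _{c_2}\gamma_{c_1+1}(F),\ldots,\ _{c_s}\gamma_{c_{s-1}+1}\circ\cdots\circ\gamma_{c_1+1}(F)]}.$$

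First I would treat the free-abelian model $R=F'$, i.e. $G={\mathbf Z}^{(d)}$, and prove that ${\mathcal N}_{c_1,\ldots,c_s}M({\mathbf Z}^{(d)})$ is free abelian of rank $\beta_d$, arguing by induction on $s$ through the basic-commutator description of the iterated lower central series. For $s=1$ the denominator collapses to $[F',\ _{c_1}F]=\gamma_{c_1+2}(F)$, so the quotient is $\gamma_{c_1+1}(F)/\gamma_{c_1+2}(F)$, free abelian of rank $\chi_{c_1+1}(d)$ with the basic commutators of weight $c_1+1$ as a basis, by the Witt formula. For the step, write $W=\gamma_{c_s+1}\circ W'$ with $W'=\gamma_{c_{s-1}+1}\circ\cdots\circ\gamma_{c_1+1}$; by the structure theory of free polynilpotent groups the layer controlling the multiplier is free abelian on the basic commutators of weight $c_s+1$ formed from the $(s-1)$-fold iterate $\beta_d'=\chi_{c_{s-1}+1}(\cdots(\chi_{c_1+1}(d))\cdots)$ basic commutators of the preceding layer, treated as independent letters. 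Counting these by the Witt formula once more gives rank $\chi_{c_s+1}(\beta_d')=\beta_d$, and torsion-freeness is preserved since all the layers involved are free abelian.

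Next I would reinstate the torsion relations. Since $F'\subseteq R$ we have $[F'\,{\mathcal N}^{*}F]\subseteq[R\,{\mathcal N}^{*}F]$, so there is a natural epimorphism ${\mathcal N}_{c_1,\ldots,c_s}M({\mathbf Z}^{(d)})\cong{\mathbf Z}^{(\beta_d)}\rightarrow{\mathcal N}_{c_1,\ldots,c_s}M(G)$ presenting the latter as the quotient of the free abelian group on the $\beta_d$ basic polynilpotent commutators by the relations induced by the powers $x_{m+i}^{n_i}$. Ordering these commutators by their highest-indexed letter, those involving only $x_1,\ldots,x_m$ receive no relation and, being $\beta_m$ in number, give the summand ${\mathbf Z}^{(\beta_m)}$. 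For $1\le i\le k$, the commutators whose highest letter is $x_{m+i}$ number $\beta_{m+i}-\beta_{m+i-1}$ (successive differences of iterated Witt counts), and I would show each is made of order exactly $n_i$, producing ${\mathbf Z}_{n_i}^{(\beta_{m+i}-\beta_{m+i-1})}$; the hypothesis $n_{i+1}\mid n_i$ is what lets these pieces assemble into the stated invariant-factor form.

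The main obstacle is pinning down exactly how the power-relators act on the free abelian group $W(F)/[F'\,{\mathcal N}^{*}F]\cong{\mathbf Z}^{(\beta_d)}$. One must show that, after imposing the relations coming from $x_{m+1}^{n_1},\ldots,x_{m+k}^{n_k}$, each basic commutator whose highest letter is $x_{m+i}$ has order exactly $n_i$ --- neither larger nor a proper divisor --- and that the resulting quotient splits as the claimed direct sum rather than hiding a nontrivial extension. I would establish this by a commutator-collection analysis inside the iterated verbal subgroup $W(F)$, in the spirit of the computation behind Theorem 2.7: expand each relator by the collection formula, reduce modulo $[F'\,{\mathcal N}^{*}F]$ to kill the lower-weight and cross terms, and read off the resulting coefficients on each graded coordinate, using $n_{i+1}\mid n_i$ to see that the governing order on a commutator is that of its highest-indexed letter. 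Carrying out this bookkeeping for the iterated polynilpotent commutator structure, and matching it against the iterated Witt counts $\beta$ at each level, is the technical heart of the proof.
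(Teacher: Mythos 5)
The paper does not prove this statement: Theorem 2.11 is quoted from Mashayekhy and Parvizi [12] and is used here only as a black box, so there is no internal proof to compare your sketch against. Judged on its own terms, your outline follows the natural route (free presentation, reduction to $W(F)/[R\,{\mathcal N}^{*}F]$ since $W(F)\subseteq F'\subseteq R$, the free-abelian case via iterated basic-commutator and Witt counts, then imposing the power relators), and the numerology is consistent with the statement: the polynilpotent basic commutators whose highest letter is $x_{m+i}$ number $\beta_{m+i}-\beta_{m+i-1}$ and should acquire order $n_i$, the smallest order among their torsion letters because $n_{i+1}\mid n_i$.

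Two points in your sketch carry essentially all of the difficulty and are not yet arguments. First, the inductive step for ${\mathcal N}_{c_1,\ldots,c_s}M(\mathbf{Z}^{(d)})\cong\mathbf{Z}^{(\beta_d)}$ appeals to ``the structure theory of free polynilpotent groups'' in order to treat the $\chi_{c_{s-1}+1}(\cdots(\chi_{c_1+1}(d))\cdots)$ basic commutators of the previous layer as independent free letters; that is precisely the nontrivial input (one must show that the previous verbal layer, modulo the relevant denominator, is free abelian on those commutators and that applying $\gamma_{c_s+1}$ interacts with the denominator as claimed), and it cannot be waved through. Second, your collection argument only yields an upper bound: expanding the relators shows that $n_i\cdot\alpha=0$ holds for each commutator $\alpha$ with highest letter $x_{m+i}$, i.e.\ that $[R\,{\mathcal N}^{*}F]/[F'\,{\mathcal N}^{*}F]$ \emph{contains} the subgroup generated by these elements. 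To conclude that the orders are exactly $n_i$ and that the quotient splits as stated, you must also prove the reverse containment (that these elements generate the whole relation subgroup), or else supply a lower bound by mapping $G$ onto suitable retracts or quotients with known multipliers and invoking functoriality of the Baer invariant. As written, the sketch asserts ``neither larger nor a proper divisor'' without providing a mechanism for that lower bound.
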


Theorems 2.6  and  2.11 imply the following useful
inequalities.
\begin{cor} With the notation of previous theorem let $G$ be a finite
$d$-generator $p$-group of order $p^n$. Then
$$p^{\beta_d} \leq |{\mathcal
N}_{c_1,c_2,...,c_s} M(G)||\gamma_{c_s+1}(\gamma_{c_{s-1}+1}( ...
(\gamma_{c_1+1}(G))...))|\leq p^{\beta_n}.$$
\end{cor}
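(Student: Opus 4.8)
The plan is to obtain this corollary directly from Theorems 2.6 and 2.11 by specializing the variety to ${\mathcal V}={\mathcal N}_{c_1,c_2,\dots,c_s}$ and evaluating the two outer terms of Theorem 2.6 on elementary abelian groups. First I would recall that for the variety of polynilpotent groups of class row $(c_1,\dots,c_s)$ the verbal subgroup of $G$ is exactly
$$V(G)=\gamma_{c_s+1}(\gamma_{c_{s-1}+1}(\cdots(\gamma_{c_1+1}(G))\cdots)),$$
so the middle factor $|{\mathcal N}_{c_1,\dots,c_s}M(G)||V(G)|$ in Theorem 2.6 is precisely the quantity displayed in the statement.

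Next I would compute the two bounding terms. Applying Theorem 2.11 to the elementary abelian group $\textbf{Z}_p^{(n)}$, which in the notation there corresponds to free rank $m=0$, torsion length $k=n$, and invariants $n_1=\cdots=n_n=p$, gives
$${\mathcal N}_{c_1,\dots,c_s}M(\textbf{Z}_p^{(n)})\cong \textbf{Z}^{(\beta_0)}\oplus\textbf{Z}_p^{(\beta_1-\beta_0)}\oplus\cdots\oplus\textbf{Z}_p^{(\beta_n-\beta_{n-1})}.$$
Since there are no basic commutators on zero letters, $\chi_{c+1}(0)=0$ and hence $\beta_0=0$; the free-abelian summand therefore vanishes and the group is a finite $p$-group. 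Its order telescopes to $p^{(\beta_1-\beta_0)+\cdots+(\beta_n-\beta_{n-1})}=p^{\beta_n}$, and the identical computation with $d$ in place of $n$ yields $|{\mathcal N}_{c_1,\dots,c_s}M(\textbf{Z}_p^{(d)})|=p^{\beta_d}$.

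Finally I would substitute these two evaluations into the chain of inequalities of Theorem 2.6, with ${\mathcal V}={\mathcal N}_{c_1,\dots,c_s}$, to recover
$$p^{\beta_d}\leq |{\mathcal N}_{c_1,\dots,c_s}M(G)||\gamma_{c_s+1}(\gamma_{c_{s-1}+1}(\cdots(\gamma_{c_1+1}(G))\cdots))|\leq p^{\beta_n},$$
which is the assertion. There is no real obstacle here beyond bookkeeping: the only points requiring care are confirming that $\beta_0=0$, so that the order of ${\mathcal N}_{c_1,\dots,c_s}M(\textbf{Z}_p^{(n)})$ is genuinely finite and equals $p^{\beta_n}$ rather than carrying an infinite free part, and matching the verbal subgroup of the polynilpotent variety with the iterated lower central term appearing in the displayed inequality.
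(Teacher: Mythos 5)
Your proposal is correct and follows exactly the route the paper intends: the paper states this corollary as an immediate consequence of Theorems 2.6 and 2.11, and your argument---identifying $V(G)$ with the iterated lower central term and evaluating $|{\mathcal N}_{c_1,\dots,c_s}M(\textbf{Z}_p^{(d)})|=p^{\beta_d}$ and $|{\mathcal N}_{c_1,\dots,c_s}M(\textbf{Z}_p^{(n)})|=p^{\beta_n}$ via the telescoping sum, noting $\beta_0=0$---is precisely that derivation, spelled out.
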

\begin{thm} (Hokmabadi, Mashayekhy, Mohammadzadeh [8]).
Let $G\cong \underbrace{{\bf
{Z}}\stackrel{n}{*}...\stackrel{n}{*}{\bf
{Z}}}_{m-copies}\stackrel{n}{*} {\bf
{Z}}_{r_1}\stackrel{n}{*}...\stackrel{n}{*}{\bf{Z}}_{r_t}$, be the
$n$th nilpotent product of some cyclic groups, where $r_{i+1}$
divides $r_i$ for all $ 1 \leq i \leq t-1$. If $c \geq n$ and
$(p,r_1)=1$ for all primes $p$ less than or equal to $n$, then the
$c$-nilpotent multiplier of $G$ is isomorphic to
$${\bf{Z}}^{(\sum ^{n}_{i=1}\chi _{c+i}(m))} \oplus {\bf{Z}}_{r_1}^{(\sum ^{n}_{i=1}
(\chi _{c+i}(m+1)-\chi _{c+i}(m)))}\oplus ...\oplus
{\bf{Z}}_{r_t}^{(\sum ^{n}_{i=1}(\chi _{c+i}(m+t)-\chi
_{c+i}(m+t-1)))}.$$
\end{thm}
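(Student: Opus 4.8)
The plan is to compute the $c$-nilpotent multiplier straight from a free presentation, using the hypothesis $c\ge n$ to trivialise the intersection $R\cap\gamma_{c+1}(F)$. Let $F$ be free on $d=m+t$ generators $x_1,\dots,x_m,y_1,\dots,y_t$ and present $G\cong F/R$ with $R=T\,\gamma_{n+1}(F)$, where $T=\langle y_1^{r_1},\dots,y_t^{r_t}\rangle^{F}$ is the normal closure of the torsion relators (the $x_i$ being the free generators and $\gamma_{n+1}(F)$ forcing the $n$th nilpotent product). Since $c\ge n$ we get $\gamma_{c+1}(F)\subseteq\gamma_{n+1}(F)\subseteq R$, so $R\cap\gamma_{c+1}(F)=\gamma_{c+1}(F)$ and therefore ${\mathcal N}_cM(G)=\gamma_{c+1}(F)/[R,{}_{c}F]$. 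Expanding the denominator, $[R,{}_{c}F]=[T,{}_{c}F]\,[\gamma_{n+1}(F),{}_{c}F]=[T,{}_{c}F]\,\gamma_{c+n+1}(F)$, so the whole problem becomes the description of the abelian group $\gamma_{c+1}(F)\big/\big([T,{}_{c}F]\,\gamma_{c+n+1}(F)\big)$.

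I would then pass to the free nilpotent quotient $\bar F=F/\gamma_{c+n+1}(F)$ of class $c+n$ and set $A:=\gamma_{c+1}(\bar F)$. Because $c\ge n$ one has $2c+2>c+n$, hence $\gamma_{2c+2}(\bar F)=1$ and $A$ is abelian; by the Magnus--Witt basic-commutator theory $A$ is free abelian with basis $\mathcal B$ the basic commutators of weights $c+1,c+2,\dots,c+n$ on $x_1,\dots,x_m,y_1,\dots,y_t$, of total rank $\sum_{i=1}^{n}\chi_{c+i}(d)$. I would partition $\mathcal B$ by the largest-index generator that occurs: let $\mathcal B_0$ consist of those involving only $x_1,\dots,x_m$, and for $1\le\ell\le t$ let $\mathcal B_\ell$ consist of those in which $y_\ell$ occurs but no $y_{\ell+1},\dots,y_t$ does. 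The Witt formula gives $|\mathcal B_0|=\sum_{i=1}^{n}\chi_{c+i}(m)$ and $|\mathcal B_\ell|=\sum_{i=1}^{n}\big(\chi_{c+i}(m+\ell)-\chi_{c+i}(m+\ell-1)\big)$, which already reproduce the exponents in the asserted decomposition; only the torsion remains to be found.

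The crux is the clean identification $[\bar T,{}_{c}\bar F]=\bigoplus_{\ell=1}^{t}r_\ell\,\langle\mathcal B_\ell\rangle$ inside $A$. For ``$\subseteq$'' I would use the collection (Hall--Petrescu) process: for a generator $u=(y_\ell^{r_\ell})^{g}$ of $T$ and free generators $f_1,\dots,f_c$, the commutator $[u,f_1,\dots,f_c]$ collects into a product of basic commutators whose exponents are $r_\ell$ together with binomial coefficients $\binom{r_\ell}{k}$ with $k\le n$. Here the hypothesis $(q,r_1)=1$ for every prime $q\le n$ is decisive: since $r_\ell\mid r_1$ we have $\gcd(k,r_\ell)=1$ for all $k\le n$, whence $r_\ell\mid\binom{r_\ell}{k}$, so every exponent in the expansion is divisible by $r_\ell$. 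Moreover the retraction $y_\ell\mapsto 1$ annihilates $u$, so every basic commutator occurring involves $y_\ell$ and thus has top index $\ell'\ge\ell$; as $\ell'\ge\ell$ forces $r_{\ell'}\mid r_\ell$, each such term lies in $r_{\ell'}\langle\mathcal B_{\ell'}\rangle$. This yields $[\bar T,{}_{c}\bar F]\subseteq\bigoplus_\ell r_\ell\langle\mathcal B_\ell\rangle$. For the reverse inclusion I would exploit $[T,{}_{c+j}F]\subseteq[T,{}_{c}F]$ for $0\le j\le n-1$, so that the left-normed commutators $[y_\ell^{r_\ell},f_1,\dots,f_{c+j}]$ contribute, with leading term $r_\ell[y_\ell,f_1,\dots,f_{c+j}]$, at every weight $c+1+j$; an induction on weight then shows these leading terms, together with conjugates, generate all of $r_\ell\langle\mathcal B_\ell\rangle$.

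Granting the identification, $A/[\bar T,{}_{c}\bar F]\cong\mathbf Z^{|\mathcal B_0|}\oplus\bigoplus_{\ell=1}^{t}(\mathbf Z/r_\ell\mathbf Z)^{|\mathcal B_\ell|}$, and substituting the cardinalities of the previous paragraph yields exactly the stated isomorphism. The main obstacle is the reverse inclusion ``$\supseteq$'' in the third step: one must verify that the weight-graded left-normed commutators beginning with $y_\ell$ really do span $\langle\mathcal B_\ell\rangle$ modulo higher weight, and that after the collection corrections (all divisible by $r_\ell$) no further collapse occurs, so that each $y_\ell$-involving basic commutator acquires order \emph{exactly} $r_\ell$. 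Everything else is the reduction afforded by $c\ge n$ and the bookkeeping of basic-commutator counts through the Witt formula.
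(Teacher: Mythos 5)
First, a point of comparison: the paper does not prove this statement. It is Theorem~2.13, quoted without proof from reference~[8] (``Polynilpotent multipliers of some nilpotent products of cyclic groups~II''), so there is no in-paper argument to measure yours against. Judged on its own, your reduction is the right one and is the strategy used throughout this circle of papers: the presentation $R=T\gamma_{n+1}(F)$ with $T$ the normal closure of the torsion relators, the observation that $c\ge n$ forces $R\cap\gamma_{c+1}(F)=\gamma_{c+1}(F)$ and $[R,{}_{c}F]=[T,{}_{c}F]\,\gamma_{c+n+1}(F)$, the passage to the free nilpotent group of class $c+n$ in which $\gamma_{c+1}$ is free abelian on the basic commutators of weights $c+1,\dots,c+n$ (abelian because $2c+2>c+n$), and the partition of that basis by the largest occurring torsion generator, whose cardinalities under Witt's formula are exactly the exponents in the statement. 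Your use of the small-prime hypothesis, namely $\gcd(k,r_\ell)=1$ for $k\le n$ giving $r_\ell\mid\binom{r_\ell}{k}$ via $k\binom{r_\ell}{k}=r_\ell\binom{r_\ell-1}{k-1}$, is also exactly what that hypothesis is for.

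The genuine gap is the one you flag yourself: the identification $[\bar T,{}_{c}\bar F]=\bigoplus_{\ell}r_\ell\langle\mathcal{B}_\ell\rangle$ carries essentially all the content of the theorem, and you establish neither inclusion completely. For ``$\subseteq$'' you invoke a Struik-type collection lemma (that $[(y_\ell^{r_\ell})^{g},f_1,\dots,f_c]$ collects into basic commutators with exponents $r_\ell$ or $\binom{r_\ell}{k}$, $k\le n$) without stating or proving it in the precise form needed in the window of weights $c+1,\dots,c+n$. For ``$\supseteq$'' two separate facts are required and only gestured at: (a) that the left-normed commutators $[y_\ell,f_1,\dots,f_{c+j}]$ span, over $\mathbf{Z}$ and modulo weight $>c+j+1$, the full $\mathbf{Z}$-span of the weight-$(c+j+1)$ basic commutators involving $y_\ell$ --- this follows from the Jacobi identity (the ideal of the free Lie ring generated by $y_\ell$ is $\mathbf{Z}$-spanned by left-normed brackets with $y_\ell$ in the first slot), but it must be said; and (b) a downward induction on weight from $c+n$ to absorb the binomial-coefficient correction terms, which live in higher weight and are divisible by $r_\ell$. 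Once both inclusions are actually proved, equality holds and the ``no further collapse'' worry disappears, so the architecture is sound; but as written the central lemma is asserted rather than demonstrated, and that is precisely the part that reference~[8] exists to supply.
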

\begin{thm} (Hokmabadi, Mashayekhy,
Mohammadzadeh [8]). Let $G\cong\underbrace{{\bf
{Z}}\stackrel{n}{*}...\stackrel{n}{*}{\bf
{Z}}}_{m-copies}\stackrel{n}{*} {\bf
{Z}}_{r_1}\stackrel{n}{*}...\stackrel{n}{*}{\bf{Z}}_{r_t}$ be the
$n$th nilpotent product of some cyclic groups, where $r_{i+1}$
divides $r_i$ for all $ 1 \leq i \leq t-1$. If $(p,r_1)=1$ for all
primes $p$ less than or equal to $n$, then the structure of the
polynilpotent multiplier of $G$ is
$${\mathcal N}_{c_1,c_2,...,c_s}M(G)= {\bf{Z}}^{(d_m)} \oplus {\bf{Z}}_{r_1}^{(d_{m+1}-d_m)}
\oplus ...\oplus{\bf{ Z}}_{r_t}^{ (d_{m+t}-d_{m+t-1})},$$  where
$d_i=\chi_{c_s+1}(...(\chi_{c_2+1}(\sum _{j=1}^{n}\chi
_{c_1+j}(i)))...),$ for all $c_1 \geq n$
 and $c_2, ..., c_s \geq 1$ and $ m \leq i \leq m+t$.
\end{thm}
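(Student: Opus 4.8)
The plan is to peel off the innermost class $c_1$ and reduce the whole polynilpotent multiplier to a polynilpotent multiplier of the \emph{shorter} class row $(c_2,\dots,c_s)$ applied to the abelian group ${\mathcal N}_{c_1}M(G)$, whose structure is already supplied by Theorem 2.13; the remaining layers are then handled by Theorem 2.11. First I would fix a free presentation $G\cong F/R$ with $F$ free of rank $m+t$. Since $G$ is an $n$th nilpotent product of cyclic groups it is nilpotent of class at most $n$, so $\gamma_{n+1}(F)\subseteq R$, and because $c_1\ge n$ this yields $\gamma_{c_1+1}(F)\subseteq R$. Writing $W=\gamma_{c_1+1}(F)$ --- a free group by the Nielsen--Schreier theorem --- and $R_1=[R,\ _{c_1}F]$, a normal subgroup of $F$ contained in $W$, the inclusion $\gamma_{c_1+1}(F)\subseteq R$ gives $R\cap\gamma_{c_1+1}(F)=W$, so that ${\mathcal N}_{c_1}M(G)=W/R_1$ is exactly the abelian group of Theorem 2.13.

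The crux is the isomorphism
$${\mathcal N}_{c_1,c_2,\dots,c_s}M(G)\cong {\mathcal N}_{c_2,\dots,c_s}M(W/R_1),$$
which I would establish by comparing the two defining quotients through the free presentation $1\to R_1\to W\to W/R_1\to 1$. On the numerator side, $\gamma_{c_s+1}\circ\cdots\circ\gamma_{c_1+1}(F)\subseteq\gamma_{c_1+1}(F)\subseteq R$, so the numerator of the left-hand side is the full verbal subgroup $V(F)=\gamma_{c_s+1}\circ\cdots\circ\gamma_{c_1+1}(F)$; moreover, since $c_2\ge1$ one has $V(F)\subseteq\gamma_{c_2+1}(W)\subseteq[W,W]\subseteq\gamma_{2c_1+2}(F)=[\gamma_{c_1+2}(F),\ _{c_1}F]\subseteq R_1$, hence $R_1\cap V(F)=V(F)$, which is the numerator of the right-hand side. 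On the denominator side, the left-normed structure of Hekster's bracket, with $[R,\ _{c_1}F]=R_1$, $\gamma_{c_1+1}(F)=W$ and $\gamma_{c_j+1}\circ\cdots\circ\gamma_{c_1+1}(F)=\gamma_{c_j+1}\circ\cdots\circ\gamma_{c_2+1}(W)$, gives
$$[R\,{\mathcal N}^{*}_{c_1,\dots,c_s}F]=[R_1,\ _{c_2}W,\ _{c_3}\gamma_{c_2+1}(W),\dots,\ _{c_s}\gamma_{c_{s-1}+1}\circ\cdots\circ\gamma_{c_2+1}(W)]=[R_1\,{\mathcal N}^{*}_{c_2,\dots,c_s}W].$$
Equal numerators and denominators, together with the independence of Baer invariants from the chosen free presentation, give the isomorphism.

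It then remains to make the two ends explicit. By Theorem 2.13 (with $c=c_1$), the abelian group $A:={\mathcal N}_{c_1}M(G)=W/R_1$ has free rank $e_0=\sum_{j=1}^n\chi_{c_1+j}(m)$ and torsion blocks ${\bf Z}_{r_k}^{(e_k-e_{k-1})}$ for $1\le k\le t$, where $e_k=\sum_{j=1}^n\chi_{c_1+j}(m+k)$; since $r_{k+1}\mid r_k$, these orders form a valid chain of invariant factors, so $A$ satisfies the hypotheses of Theorem 2.11. Applying Theorem 2.11 to $A$ with the class row $(c_2,\dots,c_s)$, whose counting function is $\beta_i=\chi_{c_s+1}(\cdots(\chi_{c_2+1}(i))\cdots)$, produces free rank $\beta_{e_0}$ and torsion blocks ${\bf Z}_{r_k}^{(\beta_{e_k}-\beta_{e_{k-1}})}$. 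Since $\beta_{e_k}=\chi_{c_s+1}(\cdots(\chi_{c_2+1}(\sum_{j=1}^n\chi_{c_1+j}(m+k)))\cdots)=d_{m+k}$, this is precisely
$${\mathcal N}_{c_1,\dots,c_s}M(G)\cong {\bf Z}^{(d_m)}\oplus{\bf Z}_{r_1}^{(d_{m+1}-d_m)}\oplus\cdots\oplus{\bf Z}_{r_t}^{(d_{m+t}-d_{m+t-1})},$$
as claimed.

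I expect the main obstacle to be the careful justification of the reduction isomorphism. One must verify that the infinite-rank free group $W=\gamma_{c_1+1}(F)$ genuinely furnishes a free presentation of $A$, that each iterated verbal subgroup $\gamma_{c_j+1}\circ\cdots\circ\gamma_{c_2+1}(W)$ formed inside $W$ coincides with the corresponding iterated lower central subgroup $\gamma_{c_j+1}\circ\cdots\circ\gamma_{c_1+1}(F)$ of $F$, and that both the numerator intersection and the denominator bracket collapse as stated --- all of which rest on the single hypothesis $c_1\ge n$ (to force $\gamma_{c_1+1}(F)\subseteq R$) together with $c_2,\dots,c_s\ge1$. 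A secondary, more routine point is confirming that the group delivered by Theorem 2.13 has the divisibility structure required by Theorem 2.11, which is immediate from $r_{k+1}\mid r_k$.
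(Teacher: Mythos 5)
This statement (Theorem 2.14) is quoted in the paper from reference [8] without any proof, so there is no internal argument to compare yours against; I can only assess your proposal on its own terms, and it is correct. Your reduction is the natural (and, as far as the cited literature goes, the standard) route: since $G$ is an $n$th nilpotent product of cyclic groups it is nilpotent of class at most $n\le c_1$, so $\gamma_{c_1+1}(F)\subseteq R$, and your verification that both the numerator and the Hekster denominator of ${\mathcal N}_{c_1,\dots,c_s}M(G)$ coincide with those of ${\mathcal N}_{c_2,\dots,c_s}M\bigl(W/R_1\bigr)$ computed from the free presentation $W=\gamma_{c_1+1}(F)$, $R_1=[R,\ _{c_1}F]$ is sound: the containments $V(F)\subseteq\gamma_{c_1+1}(F)\subseteq R$ and $\gamma_{c_2+1}(W)\subseteq\gamma_{2c_1+1}(F)=[\gamma_{c_1+1}(F),\ _{c_1}F]\subseteq R_1$ do exactly what you need, and the left-normed bracket identifications are definitional. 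Two small points you gloss over but which are easy to supply: (a) when you feed the output of Theorem 2.13 into Theorem 2.11 you must first unpack each block ${\bf Z}_{r_k}^{(e_k-e_{k-1})}$ into single cyclic summands (the divisibility chain $r_{k+1}\mid r_k$ makes this legitimate) and then telescope $\sum_i(\beta_{e_{k-1}+i}-\beta_{e_{k-1}+i-1})=\beta_{e_k}-\beta_{e_{k-1}}$ to regroup; (b) your argument implicitly assumes $s\ge2$, the case $s=1$ being Theorem 2.13 verbatim. With those remarks the proposal is a complete and correct derivation of the theorem from Theorems 2.11 and 2.13.
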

\begin{thm} (Hokmabadi, Mashayekhy [7]).
Let $G\cong\underbrace{{\bf
{Z}}\stackrel{n}{*}...\stackrel{n}{*}{\bf
{Z}}}_{m-copies}\stackrel{n}{*} {\bf
{Z}}_{r_1}\stackrel{n}{*}...\stackrel{n}{*}{\bf{Z}}_{r_t}$ be the
$n$th nilpotent product of some cyclic groups such that
$r_{i+1}$ divides $r_i$ for all $ 1 \leq i \leq t-1$. If
$(p,r_1)=1$ for any prime $p$ less than or equal to $n+c$, then\\
(i) if $n\geq c$, then ${\mathcal N}_{c}M(G)= {\bf{Z}}^{(g_0)}
\oplus {\bf{Z}}_{r_1}^{(g_{1}-g_0)} \oplus ...\oplus{\bf{
Z}}_{r_t}^{ (g_{t}-g_{t-1})};$\\
(ii)  if $c \geq n$, then ${\mathcal N}_{c}M(G)= {\bf{Z}}^{(f_0)}
\oplus {\bf{Z}}_{r_1}^{(f_{1}-f_0)} \oplus ...\oplus{\bf{
Z}}_{r_t}^{ (f_{t}-f_{t-1})},$\\ where $f_k=\sum _{i=1}^{n}\chi
_{c+i}(m+k)$ and $g_k=\sum _{i=1}^{c}\chi _{n+i}(m+k)$  for $0
\leq k \leq t.$
\end{thm}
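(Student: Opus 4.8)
The plan is to compute $\mathcal N_cM(G)$ from its definition $\mathcal N_cM(G)=(R\cap\gamma_{c+1}(F))/[R,{}_cF]$ using a free presentation of $G$. Take $F$ free on $x_1,\dots,x_m,y_1,\dots,y_t$; since for cyclic groups the $n$th nilpotent product kills $\gamma_{n+1}$ of the free product (Definition 2.1), we have $G=F/R$ with $R=S\,\gamma_{n+1}(F)$ and $S=\langle y_1^{r_1},\dots,y_t^{r_t}\rangle^{F}$. As $S$ and $\gamma_{n+1}(F)$ are normal in $F$, one gets $[R,{}_cF]=[S,{}_cF]\,[\gamma_{n+1}(F),{}_cF]=[S,{}_cF]\,\gamma_{n+c+1}(F)$, so the whole computation lives among basic commutators of weight at most $n+c$. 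For the case $c\ge n$ of part (ii) I would simply specialize Theorem 2.14 to $s=1$, $c_1=c$: its exponents satisfy $d_{m+k}=\sum_{j=1}^{n}\chi_{c+j}(m+k)=f_k$, and the present hypothesis $(p,r_1)=1$ for all primes $p\le n+c$ is stronger than the $p\le n$ required there, so part (ii) is immediate. The real content is part (i), which I treat directly.

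Assume now $n\ge c$, so that $\gamma_{n+1}(F)\subseteq\gamma_{c+1}(F)$ and, by the modular law, $R\cap\gamma_{c+1}(F)=(S\cap\gamma_{c+1}(F))\,\gamma_{n+1}(F)$. The key reduction is that the part of $S\cap\gamma_{c+1}(F)$ in weights at most $n$ contributes nothing: any weight-$w$ element of $S$ with $c<w\le n$ lies, modulo $\gamma_{w+1}(F)$, in $[S,{}_{w-1}F]\subseteq[S,{}_cF]$, hence is killed in the quotient. Therefore only the contribution of $\gamma_{n+1}(F)/\gamma_{n+c+1}(F)$ survives, and
$$\mathcal N_cM(G)\cong\frac{\gamma_{n+1}(F)}{\big([S,{}_cF]\,\gamma_{n+c+1}(F)\big)\cap\gamma_{n+1}(F)}.$$
Because $n\ge c$ gives $2(n+1)\ge n+c+1$, the group $\gamma_{n+1}(F)/\gamma_{n+c+1}(F)$ is abelian, in fact free abelian on the basic commutators of weights $n+1,\dots,n+c$ on the $m+t$ generators, of rank $\sum_{i=1}^{c}\chi_{n+i}(m+t)=g_t$ by the Witt formula.

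It then remains to read off how $S$ acts on this free abelian group. Partition the basic commutators of weights $n+1,\dots,n+c$ according to the highest-indexed generator appearing in them. Those involving only $x_1,\dots,x_m$ are untouched by $[S,{}_cF]$ and remain free, giving $\mathbf{Z}^{(g_0)}$ since their number is $g_0=\sum_{i=1}^{c}\chi_{n+i}(m)$. Those whose highest generator is $y_k$ are forced, through the relation $y_k^{r_k}$, to have order dividing $r_k$, and by the Witt formula their number is $g_k-g_{k-1}$. The divisibility $r_{i+1}\mid r_i$ is what guarantees that, when several $y_j$'s occur in one commutator, the smallest modulus (largest index) governs, so these finite pieces assemble into the direct sum $\mathbf{Z}_{r_1}^{(g_1-g_0)}\oplus\cdots\oplus\mathbf{Z}_{r_t}^{(g_t-g_{t-1})}$, yielding the asserted structure.

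The main obstacle is the commutator calculus underpinning the last two steps: one must prove that, modulo $\gamma_{n+c+1}(F)$, the subgroup $[S,{}_cF]$ meets each weight class in precisely the $r_k$-multiples of the basic commutators involving $y_k$, with no further relations. This rests on the collection (Hall--Petrescu) process: expanding $[y_k^{r_k},w_1,\dots,w_c]$ produces the leading term $[y_k,w_1,\dots,w_c]^{r_k}$ together with higher-weight corrections whose exponents are divisible by binomial coefficients ${r_k\choose j}$ with $2\le j\le n+c$. To discard these corrections without introducing spurious torsion one needs every prime dividing such a coefficient — necessarily a prime at most $n+c$ — to be coprime to $r_k$, which is exactly what the hypothesis $(p,r_1)=1$ for all primes $p\le n+c$ provides, together with $r_k\mid r_1$. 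Verifying that the resulting quotient genuinely splits as the stated direct sum, rather than merely having the right composition factors, is the delicate point, and it is here that the divisibility among the $r_i$ is indispensable.
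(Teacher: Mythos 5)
The paper itself gives no proof of this statement: it is Theorem 2.15, quoted verbatim from reference [7], so there is no internal argument to measure you against. Judged on its own, your proposal correctly identifies the standard architecture for such computations (free presentation with $R=S\,\gamma_{n+1}(F)$, reduction modulo $\gamma_{n+c+1}(F)$, Hall's basis theorem for $\gamma_{n+1}(F)/\gamma_{n+c+1}(F)$, Witt-formula counting of basic commutators by highest-indexed generator), and your disposal of part (ii) by specializing Theorem 2.14 (equivalently, Theorem 2.13) with $s=1$, $c_1=c$ is legitimate, since $d_{m+k}$ there equals $f_k$ here and the coprimality hypothesis here is stronger.

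For part (i), however, there is a genuine gap: the two steps that carry the entire content of the theorem are asserted rather than proved. First, the inclusion $S\cap\gamma_{c+1}(F)\subseteq[S,{}_cF]\,\gamma_{n+1}(F)$ is justified only by the phrase that ``any weight-$w$ element of $S$ lies, modulo $\gamma_{w+1}(F)$, in $[S,{}_{w-1}F]$''; this is not an observation but an inductive claim about the filtration of the normal closure $S=\langle y_k^{r_k}\rangle^F$ by the lower central series of $F$, and proving it already requires the Hall--Petrescu expansion of $(y_k^{r_k})^g$ together with the coprimality of $r_1$ to the primes $\le n+c$ --- precisely the machinery you postpone to the end. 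Second, and more seriously, the identification of $\gamma_{n+1}(F)\cap\bigl([S,{}_cF]\,\gamma_{n+c+1}(F)\bigr)$ as exactly the subgroup generated by the $r_k$-th powers of the basic commutators involving $y_k$ (with no extra relations and with the quotient genuinely splitting as the stated direct sum) is the theorem; you explicitly label it ``the main obstacle'' and ``the delicate point'' and stop there. Note also that $[S,{}_cF]$ contains elements of weight as low as $c+1\le n$, so its intersection with $\gamma_{n+1}(F)$ is not simply its ``weight $\ge n+1$ part''; controlling products of low-weight generators of $[S,{}_cF]$ that fall into $\gamma_{n+1}(F)$ is part of the same unfinished collection-process argument. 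A plan that correctly names its own missing lemmas is a useful outline, but it is not a proof.
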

\begin{thm} (Hokmabadi, Mashayekhy,
Mohammadzadeh [8]). Let $G\cong A_1 \stackrel{n_1}{*}
A_2 \stackrel{n_2}{*} ... \stackrel{n_k}{*} A_{k+1}$ such that
$A_i\cong {\bf Z}$ for $1\leq i \leq t$ and $A_j\cong {\bf
Z}_{m_j}$ for  $ t+1 \leq j \leq k+1$. Let $c_1 \geq n_1 \geq n_2
\geq ...\geq n_k$ and $m_{k+1}| m_k|...|m_{t+1}$ and
$(p,m_{t+1})=1$ for all primes $p \leq n_1$. Then the structure of
the polynilpotent multiplier of $G$ is
$${\mathcal N}_{c_1,c_2,...,c_s}M(G)= {\bf{Z}}^{(e_0)} \oplus {\bf{Z}}_{m_{t+1}}^{(e_{t}-e_o)}
\oplus ...\oplus{\bf{ Z}}_{m_{k+1}}^{ (e_{k}-e_{k-1})},$$  where
$e_i=\chi_{c_s+1}(...(\chi_{c_2+1}(u+\sum ^{i}_{j=t}h_j))...),$
for all $ t \leq i \leq k$,
$e_0=\chi_{c_s+1}(...\\ (\chi_{c_2+1}(u))...)$, $u = \sum _{j=1}
^{n_{t-1}}\chi _{c_1+j}(t)+\sum _{i=1} ^{{t-2}} \sum
_{j=n_{i+1}+1} ^{n_{i}}\chi _{c_1+j}(i+1)$ and $h_j=\sum _{\lambda=1}
^{n_{j}}(\chi _{c_1+\lambda}(j+1)-\chi _{c_1+\lambda}(j))$.
\end{thm}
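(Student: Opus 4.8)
The plan is to compute the polynilpotent multiplier directly from its definition,
$
{\mathcal N}_{c_1,c_2,...,c_s}M(G)=\frac{R\cap\gamma_{c_s+1}\circ\cdots\circ\gamma_{c_1+1}(F)}{[R\,{\mathcal N}^*_{c_1,...,c_s}F]},
$
where $G\cong F/R$ with $F$ free on $k+1$ generators $x_1,\dots,x_{k+1}$ and $R$ the normal closure encoding the power relations $x_j^{m_j}$ (for $t+1\le j\le k+1$) together with the commutator relations that realise the multiple nilpotent product structure. The decisive feature to exploit is the chain $c_1\ge n_1\ge\cdots\ge n_k$: since $\gamma_{c_1+1}(F)\subseteq\gamma_{n_i+1}(F)$ for every $i$, the relations that collapse the free product to the prescribed multiple nilpotent product all lie at or above the level $\gamma_{c_1+1}(F)$, so they interact with the verbal subgroup in a controlled, layer-by-layer fashion.

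First I would treat the innermost ``$c_1$-level'', which amounts to the $c_1$-nilpotent multiplier ${\mathcal N}_{c_1}M(G)$ (the case $s=1$ of the claim). Using a basis of basic commutators on $x_1,\dots,x_{k+1}$ for the relevant factors of the lower central series, I would describe $R\cap\gamma_{c_1+1}(F)$ and the denominator $[R,\ _{c_1}F]$ explicitly and count the surviving generators by the Witt formula. The generators built only from the infinite-cyclic letters $x_1,\dots,x_t$ contribute the free rank $u=\sum_{j=1}^{n_{t-1}}\chi_{c_1+j}(t)+\sum_{i=1}^{t-2}\sum_{j=n_{i+1}+1}^{n_i}\chi_{c_1+j}(i+1)$, where the differing ranges of $j$ reflect the different nilpotency bounds $n_i$. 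The generators also involving a torsion letter $x_j$ contribute torsion of order $m_j$ with multiplicity $h_j=\sum_{\lambda=1}^{n_j}(\chi_{c_1+\lambda}(j+1)-\chi_{c_1+\lambda}(j))$; here the hypothesis $(p,m_{t+1})=1$ for all primes $p\le n_1$ (which, by $m_{k+1}\mid\cdots\mid m_{t+1}$, handles every $m_j$) is exactly what keeps the relevant low-weight factors of the lower central series of the free product torsion-free, so that the only torsion is that forced by the power relations. This yields ${\mathcal N}_{c_1}M(G)\cong\mathbf{Z}^{(u)}\oplus\mathbf{Z}_{m_{t+1}}^{(h_t)}\oplus\cdots\oplus\mathbf{Z}_{m_{k+1}}^{(h_k)}$.

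Next I would pass through the remaining classes $c_2,\dots,c_s$. Since the polynilpotent variety factors as ${\mathcal N}_{c_1}\cdot{\mathcal N}_{c_2,\dots,c_s}$, its verbal subgroup being $\gamma_{c_s+1}\circ\cdots\circ\gamma_{c_2+1}$ applied to $\gamma_{c_1+1}(F)$, the outer verbal operations act only on the abelian ``$c_1$-layer'' produced above. Making this precise, I would establish a reduction of the form ${\mathcal N}_{c_1,c_2,\dots,c_s}M(G)\cong{\mathcal N}_{c_2,\dots,c_s}M\bigl({\mathcal N}_{c_1}M(G)\bigr)$ and then invoke Theorem 2.11. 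Applying that theorem with class row $(c_2,\dots,c_s)$ to $\mathbf{Z}^{(u)}\oplus\mathbf{Z}_{m_{t+1}}^{(h_t)}\oplus\cdots$ sends a rank-$i$ free part to rank $\chi_{c_s+1}(\cdots(\chi_{c_2+1}(i))\cdots)$, so the free rank becomes $e_0=\chi_{c_s+1}(\cdots(\chi_{c_2+1}(u))\cdots)$ and the torsion multiplicities telescope into $e_i-e_{i-1}$ with $e_i=\chi_{c_s+1}(\cdots(\chi_{c_2+1}(u+\sum_{j=t}^i h_j))\cdots)$, giving the stated decomposition. As a consistency check, specialising to a single bound $n_1=\cdots=n_k=n$ recovers the formula $d_i=\chi_{c_s+1}(\cdots(\chi_{c_2+1}(\sum_{j=1}^n\chi_{c_1+j}(i)))\cdots)$ of Theorem 2.14.

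I expect the main obstacle to be twofold. The harder conceptual point is justifying the reduction in the third step, i.e.\ that the outer operations $\gamma_{c_2+1},\dots,\gamma_{c_s+1}$ genuinely see only the abelian $c_1$-layer and thus reduce cleanly to the abelian polynilpotent multiplier of Theorem 2.11; this needs a careful comparison of the numerator $R\cap\gamma_{c_s+1}\circ\cdots\circ\gamma_{c_1+1}(F)$ with the denominator $[R\,{\mathcal N}^*_{c_1,...,c_s}F]$, presumably via Hekster's identity and the behaviour of basic commutators of basic commutators. The more technical point is the torsion bookkeeping of the first step: while the Witt formula delivers the free ranks at once, pinning down the exact orders $m_j$ and multiplicities $h_j$ of the torsion summands, and confirming through the coprimality hypothesis that nothing of smaller order survives, is where the real care lies, especially because the strict inequalities $n_1\ge\cdots\ge n_k$ make the admissible commutator weights depend on which letters occur.
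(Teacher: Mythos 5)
This statement is Theorem~2.16 of the paper, which is quoted from reference [8] as a preliminary result; the paper itself contains no proof of it, so there is no in-paper argument to compare your proposal against. I can only assess your outline on its own terms and against the strategy that this family of papers ([7], [8], [12]) visibly employs.

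Your overall architecture is the right one and matches that strategy: because $c_1\ge n_1\ge\cdots\ge n_k$ forces $G$ to be nilpotent of class at most $n_1\le c_1$, one has $\gamma_{c_1+1}(F)\subseteq R$, so the numerator collapses, ${\mathcal N}_{c_1}M(G)$ coincides with $\gamma_{c_1+1}(F)/[R,\ _{c_1}F]$, and this group inherits the free presentation $F_1/R_1$ with $F_1=\gamma_{c_1+1}(F)$ and $R_1=[R,\ _{c_1}F]$; Hekster's identity then makes ${\mathcal N}_{c_1,\dots,c_s}M(G)\cong{\mathcal N}_{c_2,\dots,c_s}M(F_1/R_1)$, and Theorem~2.11 finishes the outer layers. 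Your consistency check against Theorem~2.14 also works (the double sum in $u$ is empty and the $h_j$ telescope). The genuine gap is your first step. The entire mathematical substance of the cited result is the $s=1$ computation: producing an explicit basis of the lower central factors of the free product in which the admissible commutator weights depend on which torsion and torsion-free letters occur (this is where the ranges $j\le n_{t-1}$ versus $n_{i+1}<j\le n_i$ in $u$, and the multiplicities $h_j$, come from), and proving under the hypothesis $(p,m_{t+1})=1$ for $p\le n_1$ that the torsion summands have exactly the orders $m_j$ and nothing smaller survives. You assert this outcome and correctly flag it as ``where the real care lies,'' but you give no argument for it; without it the proposal establishes only that \emph{if} the nilpotent ($s=1$) case holds with the stated $u$ and $h_j$, then the polynilpotent case follows. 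As a proof of the theorem this is therefore incomplete, though as a reduction to the main theorem of [7]/[8] it is sound.
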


\section{Main Results}
\label{sec3}
As we mentioned before, there is a mistake in the proof of
Theorem 2.9. More precisely, in the proof of Theorem 3.5 in
[13] it is assumed that $G$ is a finite
abelian $d$-generator $p$-group of order $p^n$ and $|{\mathcal
N}_cM(G)|=p^{\chi _{c+1}(n)}$. Then using the inequality
 $i\chi _{c+1}(i)< \chi _{c+1}(i+1)$ it is proved that
$n=d$ and therefore $G$ is an elementary abelian $p$-group. Unfortunately, the inequality
$i\chi _{c+1}(i)< \chi _{c+1}(i+1)$ is not correct and so the proof is not valid.

 In this section, first, we intend to present a new proof for Theorem 2.9 in
  order to remedy the above mentioned mistake. Second, using this
   new method, we extend the result to polynilpotent multipliers of nilpotent
products of cyclic $p$-groups with some conditions.\\

\hspace{-0.5cm}\textbf{Proof of Theorem 2.9}.

\begin{proof} Let $G$ be an elementary
abelian $p$-group of order $p^n$. Then
by Theorem 2.7  we have ${\mathcal N}_cM(G)=\textbf{Z}_{p}^{(b_2)}
\oplus \textbf{Z}_{p}^{(b_3-b_2)}
\oplus ... \oplus \textbf{Z}_{p}^{(b_n-b_{n-1})}$, where $b_i= \chi _{c+1}(i)$, and hence  $|{\mathcal N}_cM(G)|=p^{\chi_{c+1}(n)}$.

Conversely, suppose  that $|{\mathcal N}_cM(G)|=p^{\chi_{c+1}(n)}$. Since
$G$ is an abelian $p$-group of order $p^n$, we can consider $G$ as follows:
$$G\cong
 \textbf{Z}_{p^{\alpha_1}}\oplus
\textbf{Z}_{p^{\alpha_2}}\oplus ...\oplus
\textbf{Z}_{p^{\alpha_d}},$$
where $\alpha_1 \geq \alpha_2\geq
...\geq \alpha_d$ and $\alpha_1 + \alpha_2+...\alpha_d=n$.
By Theorem 2.7 ${\mathcal
N}_cM(G)=\textbf{Z}_{p^{\alpha_2}}^{(b_2)} \oplus
\textbf{Z}_{p^{\alpha_3}}^{(b_3-b_2)} \oplus ... \oplus
\textbf{Z}_{p^{\alpha_d}}^{(b_d-b_{d-1})}$
and so  $|{\mathcal N}_cM(G)|=p^{\alpha_2 b_2+\alpha_3
(b_3-b_2)+...+\alpha_d (b_d-b_{d-1})}$. On the other hand by
hypothesis $|{\mathcal N}_cM(G)|=p^{b_n}$. Therefore
 $b_n=\alpha_2
b_2+\alpha_3 (b_3-b_2)+...+\alpha_d (b_d-b_{d-1})$. Also
$b_n=(b_n-b_{n-1})+(b_{n-1}-b_{n-2})+...+(b_3-b_2)+b_2$. Thus
$$(b_n-b_{n-1})+(b_{n-1}-b_{n-2})+...+(b_3-b_2)+b_2=\alpha_2 b_2+\alpha_3
(b_3-b_2)+...+\alpha_d (b_d-b_{d-1})=$$
$$\underbrace{b_2+...+b_2}_{\alpha_2-copies}+
\underbrace{(b_3-b_2)+...+(b_3-b_2)}_{\alpha_3-copies}+...+
\underbrace{(b_d-b_{d-1})+...+(b_d-b_{d-1})}_{\alpha_d-copies}.$$
So we have the following equality:
$$(b_n-b_{n-1})+(b_{n-1}-b_{n-2})+...+(b_{d+1}-b_d) =$$
$$\underbrace{b_2+...+b_2}_{\alpha_2-1-copies}+
\underbrace{(b_3-b_2)+...+(b_3-b_2)}_{\alpha_3-1-copies}+...
+\underbrace{(b_d-b_{d-1})+...+(b_d-b_{d-1})}_{\alpha_d-1-copies}\ (I).$$

One can easily see that for any $i\geq 1,\ (b_i-b_{i-1})$ is the number
of basic commutators of weight $c+1$ on $i$ letters such that
$x_i$ does appear in it. So $(b_j-b_{j-1}) \geq (b_i-b_{i-1})$ whenever
$j \geq i.$ Now, assume $\alpha_1 \geq 2$. Then $ n-1>n-\alpha_1$ and so the left-hand side
of the above equality has more terms than the right-hand side. Also each term of the left-hand side of the above equality is greater than of any term of the right-hand side.
These facts imply that the equality $(I)$ dose
not hold which is a contradiction. Thus we must have $\alpha_1=1$ and hence
$d=n,$ $\alpha_1=\alpha_2=...=\alpha_n=1$. Therefore the result
holds. 
\end{proof}

The next theorem is a generalization of Theorem 2.9. Note that the
nilpotent product of finitely many finite $p$-groups
is also a finite $p$-group.
\begin{thm} Let $G\cong {\bf {Z}}_{p^{\alpha_1}}\stackrel{n}{*}{\bf
{Z}}_{p^{\alpha_2}}\stackrel{n}{*}...\stackrel{n}{*}{\bf{Z}}_{p^{\alpha_t}}$ be the $n$th nilpotent product of some cyclic groups, where
$\alpha_1 \geq \alpha_2 \geq...\geq \alpha_t $ and $ (q,p)=1$ for
all primes $q$ less than or equal to $n$.
Let ${\mathcal
N}_{c_1,c_2,...,c_s}$ be a variety of
polynilpotent groups such that $c_1 \geq n $. Then $|{\mathcal
N}_{c_1,c_2,...,c_s}M(G)|=p^{d_m}$ if and only if
$G\cong\underbrace{{\bf {Z}}_{p}\stackrel{n}{*}{\bf
{Z}}_{p}\stackrel{n}{*}...\stackrel{n}{*}{\bf{Z}}_{p
}}_{m-copies},$ where $m=\sum _{i=1}^t \alpha_i$ and
$d_m=\chi_{c_s+1}(...(\chi_{c_2+1}(\sum_{j=1}^n
\chi_{c_1+j}(m)))...)$.
\end{thm}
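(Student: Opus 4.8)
The plan is to mirror the structure of the new proof of Theorem 2.9 given just above, generalizing each step from the ordinary abelian case to the nilpotent-product setting. The forward direction is routine: if $G\cong\underbrace{{\bf Z}_p\stackrel{n}{*}\cdots\stackrel{n}{*}{\bf Z}_p}_{m\text{-copies}}$, then $G$ is the $n$th nilpotent product of $m$ copies of ${\bf Z}_p$, and applying Theorem 2.14 with $r_1=\cdots=r_t=p$ (so that every cyclic factor is ${\bf Z}_p$) immediately yields that ${\mathcal N}_{c_1,\dots,c_s}M(G)$ is a direct sum of copies of ${\bf Z}_p$ whose total rank is $d_m$, giving $|{\mathcal N}_{c_1,\dots,c_s}M(G)|=p^{d_m}$.

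For the converse I would start from the general decomposition supplied by Theorem 2.14. Since $G$ is a $p$-group, I write each factor as $\alpha_1\ge\alpha_2\ge\cdots\ge\alpha_t$ with $\sum\alpha_i=m$, and apply Theorem 2.14 to express ${\mathcal N}_{c_1,\dots,c_s}M(G)$ as a direct sum $\bigoplus_i {\bf Z}_{p^{\alpha_i}}^{(d_{?}-d_{?})}$ with the exponents controlled by the numbers $d_i=\chi_{c_s+1}(\cdots(\chi_{c_2+1}(\sum_{j=1}^n\chi_{c_1+j}(i)))\cdots)$. Taking orders then gives an equation of the form $|{\mathcal N}_{c_1,\dots,c_s}M(G)|=p^{E}$, where $E$ is a weighted sum $E=\sum_{i}\alpha_{\sigma(i)}(d_{i}-d_{i-1})$ built from the successive differences of the $d_i$'s. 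Setting $E=d_m$ and using the telescoping identity $d_m=\sum_{i}(d_i-d_{i-1})$ (with an appropriate lowest index), I reduce to a purely combinatorial equality analogous to equation $(I)$ above. The key comparison is between the two sides after cancellation: I would argue that each difference $d_i-d_{i-1}$ is a \emph{non-decreasing} function of $i$, so that assuming $\alpha_1\ge 2$ forces the left-hand side to have strictly more, and termwise larger, summands than the right-hand side, contradicting the equality; hence $\alpha_1=1$, forcing all $\alpha_i=1$ and $t=m$, which is exactly the desired elementary-abelian-type conclusion.

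\emph{The main obstacle} I anticipate is establishing the monotonicity $d_i-d_{i-1}\le d_{i+1}-d_i$ for the iterated composition $d_i=\chi_{c_s+1}(\cdots(\sum_{j=1}^n\chi_{c_1+j}(i))\cdots)$. In the ordinary case the paper uses a clean combinatorial interpretation: $b_i-b_{i-1}$ counts basic commutators of weight $c+1$ in which the last letter $x_i$ actually occurs, so monotonicity in $i$ is transparent. For the polynilpotent quantity $d_i$ the same interpretation does not apply directly, because $d_i$ is an \emph{iterated} composition of Witt-type functions rather than a single $\chi_{c+1}$. I would first handle the innermost layer $\sum_{j=1}^n\chi_{c_1+j}(i)$, whose increments are manageable since each $\chi_{c_1+j}(i)-\chi_{c_1+j}(i-1)$ counts basic commutators of weight $c_1+j$ involving the $i$th letter and is therefore non-decreasing in $i$; then I would propagate monotonicity outward through each $\chi_{c_k+1}$, using that $\chi_{c+1}$ is itself a strictly increasing and convex-like function of its argument (so that composing monotone, increment-monotone functions preserves the increment-monotone property). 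Making this propagation rigorous — in particular verifying that the composition of increment-monotone integer functions is again increment-monotone — is the delicate point and is precisely where the earlier erroneous argument failed, so I would state and prove a small auxiliary lemma on these $d_i$ increments before assembling the final contradiction.

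\emph{Once the monotonicity lemma is in hand,} the remaining bookkeeping is entirely parallel to the proof of Theorem 2.9: cancel the common terms from both sides of the order equation, observe that the hypothesis $\alpha_1\ge 2$ makes the index ranges on the two sides incompatible (more terms on one side, each at least as large), and conclude $\alpha_1=1$. Finally I would note that $\alpha_1=1$ together with $\alpha_1\ge\cdots\ge\alpha_t$ forces $\alpha_1=\cdots=\alpha_t=1$ and $t=m$, so $G\cong\underbrace{{\bf Z}_p\stackrel{n}{*}\cdots\stackrel{n}{*}{\bf Z}_p}_{m\text{-copies}}$ as claimed.
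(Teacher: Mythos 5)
Your proposal follows essentially the same route as the paper: apply Theorem 2.14 in both directions, reduce to the identity $d_m=\alpha_2 d_2+\alpha_3(d_3-d_2)+\cdots+\alpha_t(d_t-d_{t-1})$, and derive a contradiction from $\alpha_1\ge 2$ via the monotonicity of the increments $d_i-d_{i-1}$, exactly as in the corrected proof of Theorem 2.9. The one place you diverge is the proof of that monotonicity: you propose propagating increment-monotonicity (convexity) through the composition of Witt functions, whereas the paper interprets $d_i$ directly as the number of iterated basic commutators --- weight $c_s+1$ on the set $A_{s-1}$ of lower-level basic commutators on $x_1,\dots,x_i$ --- so that $d_i-d_{i-1}$ counts those in which $x_i$ occurs and the inequality follows at once by relabelling. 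Your composition lemma is also workable (for increasing integer functions with non-decreasing increments, $f(g(i))-f(g(i-1))$ is a sum over a longer, further-right block of increments of $f$ than $f(g(i-1))-f(g(i-2))$), but you would still need the basic-commutator interpretation to verify convexity of each individual $\chi_{c_k+1}$ and of the inner sum $\sum_{j=1}^{n}\chi_{c_1+j}(i)$, so the paper's single direct count is the more economical way to discharge the lemma you correctly identified as the crux.
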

\begin{proof} Let $G=\underbrace{{\bf
{Z}}_{p}\stackrel{n}{*}{\bf
{Z}}_{p}\stackrel{n}{*}...\stackrel{n}{*}{\bf{Z}}_{p
}}_{m-copies}$ and $ (q,p)=1$ for all primes $q$ less than or equal to
$n$. Then by Theorem 2.14, $${\mathcal N}_{c_1,c_2,...,c_s}M(G)=
 {\bf{Z}}_{p}^{(d_2)} \oplus
...\oplus{\bf{ Z}}_{p}^{ (d_{m}-d_{m-1})},$$  where
$d_i=\chi_{c_s+1}(...(\chi_{c_2+1}(\sum _{j=1}^{n}\chi
_{c_1+j}(i)))...),$ for all $c_1 \geq n$. Hence $$|{\mathcal
 N}_{c_1,c_2,...,c_s}M(G)|=p^{d_m}.$$

 Conversely, suppose that $|{\mathcal
 N}_{c_1,c_2,...,c_s}M(G)|=p^{d_m}$. By the hypothesis $G={\bf {Z}}_{p^{\alpha_1}}\stackrel{n}{*}{\bf
{Z}}_{p^{\alpha_2}}\stackrel{n}{*}...\stackrel{n}{*}{\bf{Z}}_{p^{\alpha
t} }$ where $\alpha_1 \geq \alpha_2 \geq...\geq \alpha_t $ and
$\alpha_1+ \alpha_2 +...+\alpha_t=m$. Now Theorem 2.14 implies
that $${\mathcal N}_{c_1,c_2,...,c_s}M(G)=
{\bf{Z}}_{p^{\alpha_2}}^{(d_2)} \oplus
{\bf{Z}}_{p^{\alpha_3}}^{(d_3-d_2)}...\oplus{\bf{
Z}}_{p^{\alpha_t}}^{ (d_{t}-d_{t-1})},$$  where
$d_i=\chi_{c_s+1}(...(\chi_{c_2+1}(\sum _{j=1}^{n}\chi
_{c_1+j}(i)))...)$. Thus $$|{\mathcal
N}_{c_1,c_2,...,c_s}M(G)=p^{\alpha_2 d_2+\alpha_3
(d_3-d_2)+...+\alpha_t (d_t-d_{t-1})} .$$ On the other hand by
hypothesis $|{\mathcal
 N}_{c_1,c_2,...,c_s}M(G)|=p^{d_m}$. Therefore\\
 $d_m=\alpha_2
d_2+\alpha_3 (d_3-d_2)+...+\alpha_t (d_t-d_{t-1})$. Now applying
a similar method to the proof of Theorem 2.9, it is enough to
show that if $j\geq i$, then $(d_j-d_{j-1}) \geq (d_i-d_{i-1}).$
In order to prove this fact consider the following sets:
 $$A_1= \{ \alpha |  \alpha \ is\ a\ basic\ commutator\ of\
weight \ c_1+1,..., c_1+n \ on\
 \ x_1,...,x_i\  \} $$ and inductively for all $2 \leq k \leq s$
$$A_k= \{ \alpha | \alpha \ is\ a\ basic\ commutator\ of\
weight \ c_k+1 \  on\ A_{k-1}  \}. $$ Clearly
$d_i=| A_s|$. It is easy to see that $$d_i-d_{i-1}= |\{ \alpha | \alpha \ is\
a\ basic\ commutator\ of\ weight \ c_s+1 \  on$$ $$\ \ \ \ \
A_{s-1}\ such\ that\ x_i\ does \ appear \ in\ \alpha\   \}|.$$  Hence the required inequality holds.
\end{proof}

Using Theorem 2.16 and a similar proof to the above and noting that $j\geq i$ implies $e_j-e_{j-1}\geq e_i-e_{i-1}$,
we can state the following theorem.
\begin{thm} Let $G \cong {\bf {Z}}_{p^{\alpha_1}}\stackrel{n_1}{*}{\bf
{Z}}_{p^{\alpha_2}}\stackrel{n_2}{*}...\stackrel{n_{t-1}}{*}{\bf{Z}}_{p^{\alpha_t}}$ be the $n$th nilpotent product of some cyclic groups, where
$\alpha_1 \geq \alpha_2 \geq...\geq \alpha_t $ and $ (q,p)=1$ for
all primes $q$ less than or equal to $n$.
Let ${\mathcal
N}_{c_1,c_2,...,c_s}$ be a variety of
polynilpotent groups such that $c_1 \geq n_1 $.
Then $|{\mathcal
N}_{c_1,c_2,...,c_s}M(G)|=p^{e_{m-1}}$ if and only if
$G\cong \underbrace{{\bf {Z}}_{p}\stackrel{n_1}{*}{\bf
{Z}}_{p}\stackrel{n_2}{*}...\stackrel{n_{m-1}}{*}{\bf{Z}}_{p
}}_{m-copies}$, where $m=\sum _{i=1}^t \alpha_i$, $e_{m-1}=\chi_{c_s+1}(...(\chi_{c_2+1}(\sum
^{m-1}_{j=0}h_j))...)$,
 and $h_j=\sum _{\lambda=1} ^{n_{j}}(\chi
_{c_1+\lambda}(j+1)-\chi _{c_1+\lambda}(j))$.
\end{thm}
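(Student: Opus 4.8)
The plan is to mirror the architecture of the proof of Theorem 3.1, replacing the single-index structure theorem (Theorem 2.14) by its multiple-product analogue (Theorem 2.16) and the numbers $d_i$ by the $e_i$. For the ``if'' direction I would take $G\cong\underbrace{{\bf Z}_p\stackrel{n_1}{*}\cdots\stackrel{n_{m-1}}{*}{\bf Z}_p}_{m-copies}$ and apply Theorem 2.16 in the (purely finite) specialization where there is no infinite cyclic factor and every finite factor equals ${\bf Z}_p$. Because the factors coincide, the decomposition reads
$$
{\mathcal N}_{c_1,c_2,\ldots,c_s}M(G)\cong {\bf Z}_p^{(e_1-e_0)}\oplus{\bf Z}_p^{(e_2-e_1)}\oplus\cdots\oplus{\bf Z}_p^{(e_{m-1}-e_{m-2})},
$$
and, since $e_0=0$, its order telescopes to $p^{e_{m-1}}$, which settles the first implication.

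For the converse I would write $G\cong{\bf Z}_{p^{\alpha_1}}\stackrel{n_1}{*}\cdots\stackrel{n_{t-1}}{*}{\bf Z}_{p^{\alpha_t}}$ with $\alpha_1\geq\cdots\geq\alpha_t$ and $\sum_{i=1}^t\alpha_i=m$, and invoke Theorem 2.16 again. Exactly as for abelian groups (Theorem 2.7), the largest factor ${\bf Z}_{p^{\alpha_1}}$ contributes nothing, so the multiplier is $\bigoplus_{i=2}^t {\bf Z}_{p^{\alpha_i}}^{(e_{i-1}-e_{i-2})}$ and hence
$$
|{\mathcal N}_{c_1,c_2,\ldots,c_s}M(G)|=p^{\sum_{i=2}^t\alpha_i(e_{i-1}-e_{i-2})}.
$$
Setting this equal to $p^{e_{m-1}}$ and expanding $e_{m-1}=\sum_{k=1}^{m-1}(e_k-e_{k-1})$ gives, after cancelling one copy of each difference, the identity
$$
\sum_{k=t}^{m-1}(e_k-e_{k-1})=\sum_{i=2}^{t}(\alpha_i-1)(e_{i-1}-e_{i-2}),
$$
the precise analogue of the equality $(I)$ in the proof of Theorem 2.9. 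The left side has $m-t$ summands indexed by $k\geq t$, while the right side has $(m-\alpha_1)-(t-1)$ summands indexed by $i-1\leq t-1$; if $\alpha_1\geq 2$ the left side carries $\alpha_1-1\geq 1$ more (positive) terms and, by the monotonicity below, each of its terms dominates every term on the right, so the identity cannot hold. Thus $\alpha_1=1$, whence $\alpha_1=\cdots=\alpha_t=1$, $t=m$, and $G$ has the asserted elementary abelian form.

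The whole comparison rests on the monotonicity claim $e_j-e_{j-1}\geq e_i-e_{i-1}$ for $j\geq i$, and this is where the argument is genuinely subtler than in Theorem 3.1. I would establish it by the basic-commutator bookkeeping used there: let $A_1(i)$ be the basic commutators of weights $c_1+1,\ldots,c_1+n_{i-1}$ on $x_1,\ldots,x_i$ (the weights admissible for the $i$-th generator being dictated by the product index $n_{i-1}$), let $A_k(i)$ be the basic commutators of weight $c_k+1$ on $A_{k-1}(i)$ for $2\leq k\leq s$, and note $e_i=|A_s(i)|$, so that $e_i-e_{i-1}$ counts the members of $A_s(i)$ in which the newest generator $x_i$ occurs. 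The obstacle specific to this theorem is that, because $n_1\geq n_2\geq\cdots$ is non-increasing, passing from $x_i$ to $x_{i+1}$ simultaneously loses the top weight levels $c_1+n_i+1,\ldots,c_1+n_{i-1}$ and gains one further old generator to commute against, so the two effects compete and no verbatim term-by-term injection is available.

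I expect this last point to be the main difficulty. I would handle it by reducing, through the nested sets $A_1,\ldots,A_s$, to the innermost estimate on Witt numbers and showing that the geometric growth of $\chi_{c_1+\lambda}(i+1)$ in the number of generators dominates the arithmetic loss of finitely many weight levels; under the hypothesis $c_1\geq n_1$ the relevant weights are large, which is exactly what lets the geometric gain outrun the loss. Exhibiting this dominance cleanly — ideally by an explicit injection that absorbs the forfeited top levels into the newly available generator — is the one step the proof must get right, since it is an inequality of this very shape (the false $i\chi_{c+1}(i)<\chi_{c+1}(i+1)$) whose careless use invalidated the original argument in [13].
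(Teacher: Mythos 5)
Your overall architecture coincides with the paper's, which disposes of this theorem in a single sentence: apply Theorem 2.16, repeat the argument of Theorems 2.9 and 3.1, and ``note'' that $j\geq i$ implies $e_j-e_{j-1}\geq e_i-e_{i-1}$. Your ``if'' direction and your reduction of the converse to that monotonicity claim are both faithful to this plan, and you are right --- and more candid than the paper --- in isolating the monotonicity as the step carrying all the content. The gap is that you do not prove it, and the route you sketch (that the growth of $\chi_{c_1+\lambda}(\cdot)$ in the number of generators dominates the ``arithmetic'' loss of the weight levels $c_1+n_j+1,\ldots,c_1+n_{j-1}$) cannot be completed, because the inequality is false as stated. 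Take $s=1$, $c_1=n_1=5$ and $n_2=1$, so that $c_1\geq n_1\geq n_2$ as required. Then $e_1-e_0=h_1=\sum_{\lambda=1}^{5}\chi_{5+\lambda}(2)=9+18+30+56+99=212$, whereas $e_2-e_1=h_2=\chi_{6}(3)-\chi_{6}(2)=116-9=107$. The forfeited levels are not an arithmetic loss: $\chi_{c_1+\lambda}(2)$ grows geometrically in the weight as well (compare $2^{10}=1024$ against $3^{6}=729$), and here the five weight levels lost on two generators outweigh the single weight level gained on three generators.

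Consequently the counting argument for the converse --- the left-hand side of the analogue of $(I)$ has $\alpha_1-1$ more summands, each dominating every summand on the right --- collapses: a surviving term $e_k-e_{k-1}$ with $k\geq t$ need not dominate a cancelled term $e_{i-1}-e_{i-2}$ with $i\leq t$ (in the example above, $107<212$). This does not by itself refute the statement, since equality of the two sides would still be a numerical coincidence that must be checked, but it does mean that neither your proposal nor the paper's one-line justification constitutes a proof of the converse when the $n_i$ are not all equal. A correct treatment needs either an argument that does not pass through termwise domination of the differences, or an additional hypothesis forcing $h_j\geq h_i$ for $j\geq i$; as written, the one step you yourself flag as ``the one step the proof must get right'' is exactly the step that fails.
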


The following result is a consequence of Theorem 2.15 and the above mentioned method
 with different condition $n\geq c$.
\begin{thm} Let $G\cong {\bf
{Z}}_{p^{\alpha_1}}\stackrel{n}{*}{\bf
{Z}}_{p^{\alpha_2}}\stackrel{n}{*}...\stackrel{n}{*}{\bf{Z}}_{p^{\alpha_t}}$ be the $n$th nilpotent product of some cyclic groups, where
$\alpha_1 \geq \alpha_2 \geq...\geq \alpha_t $, $n\geq c$ and $
(q,p)=1$ for all primes $q$ less than or equal to $n+c$. Then
${\mathcal N}_{c}M(G)=p^{g_m}$ if and only if
$G\cong \underbrace{{\bf {Z}}_{p}\stackrel{n}{*}{\bf
{Z}}_{p}\stackrel{n}{*}...\stackrel{n}{*}{\bf{Z}}_{p
}}_{m-copies},$ where $m=\sum _{i=1}^t \alpha_i$ and
$g_m = \sum_{i=1}^{c} \chi_{n+i}(m).$
\end{thm}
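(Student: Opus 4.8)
The plan is to mirror the structure of the proof of Theorem 3.2, which is itself the polynilpotent analogue of the corrected Theorem 2.9, now working with the coefficients $g_k = \sum_{i=1}^{c} \chi_{n+i}(m+k)$ supplied by Theorem 2.15(i) rather than the $d_i$ of Theorem 2.14. The two directions split as usual. For the forward (easy) direction I would assume $G \cong \underbrace{{\bf Z}_p \stackrel{n}{*} \cdots \stackrel{n}{*} {\bf Z}_p}_{m\text{-copies}}$; since $n \geq c$ and $(q,p)=1$ for all primes $q \leq n+c$, Theorem 2.15(i) applies directly and gives ${\mathcal N}_c M(G) = {\bf Z}_p^{(g_1)} \oplus {\bf Z}_p^{(g_2 - g_1)} \oplus \cdots \oplus {\bf Z}_p^{(g_{m-1} - g_{m-2})}$ (writing the generators as $m$ copies of ${\bf Z}_p$, so the free-rank term $g_0$ drops out and each torsion block contributes), whence $|{\mathcal N}_c M(G)| = p^{g_{m-1}}$. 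Here I should double-check whether the paper's intended exponent is $g_m$ as written in the statement or $g_{m-1}$, as the telescoping of the $g_k$ over an $m$-generator elementary abelian group terminates at index $m-1$; I would reconcile the indexing with the convention fixed in Theorem 3.2 before committing the final exponent.

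For the converse I would start from the given presentation $G \cong {\bf Z}_{p^{\alpha_1}} \stackrel{n}{*} \cdots \stackrel{n}{*} {\bf Z}_{p^{\alpha_t}}$ with $\alpha_1 \geq \cdots \geq \alpha_t$ and $\sum \alpha_i = m$, and invoke Theorem 2.15(i) to write ${\mathcal N}_c M(G)$ explicitly as a direct sum of cyclic $p$-groups. Taking orders yields an identity of the form $p^{g_m} = p^{\alpha_2 g_1 + \alpha_3(g_2 - g_1) + \cdots + \alpha_t(g_{t-1} - g_{t-2})}$ (after reindexing the torsion contributions exactly as in Theorem 3.2), so the exponents must agree. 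I would then expand $g_m$ telescopically as $g_m = g_1 + (g_2 - g_1) + \cdots + (g_m - g_{m-1})$ and compare with the right-hand side, producing an equality between a long sum of consecutive differences $(g_k - g_{k-1})$ and a shorter, appropriately weighted sum built from the $\alpha_i$'s. This is the analogue of equality $(I)$ in the proof of Theorem 2.9.

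The crux, and the step I expect to be the main obstacle, is the monotonicity lemma: I must show that $j \geq i$ implies $g_j - g_{j-1} \geq g_i - g_{i-1}$, i.e.\ that the successive differences of $g_k = \sum_{i=1}^{c}\chi_{n+i}(m+k)$ are nondecreasing in $k$. In Theorem 3.2 the corresponding fact for $d_i$ was proved by a clean combinatorial interpretation: $d_i - d_{i-1}$ counts basic commutators (under the iterated $A_k$ construction) in which the last generator $x_i$ actually appears, and enlarging the generating set can only increase this count. Here the argument is even more transparent, since $g_k$ is a single sum of Witt numbers rather than an iterated composition: I would identify $g_k - g_{k-1} = \sum_{i=1}^{c}\bigl(\chi_{n+i}(m+k) - \chi_{n+i}(m+k-1)\bigr)$ and note, exactly as in the remark before equality $(I)$, that each difference $\chi_{n+i}(r) - \chi_{n+i}(r-1)$ equals the number of basic commutators of weight $n+i$ on $r$ letters in which the $r$th letter appears, a quantity monotone nondecreasing in $r$; summing over $i$ preserves monotonicity.

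Once the monotonicity is in hand the contradiction is routine and identical in spirit to Theorem 2.9: if some $\alpha_1 \geq 2$, then the left-hand telescoping sum has strictly more terms than the weighted right-hand sum, and by monotonicity each left-hand term dominates each right-hand term, so equality is impossible. Hence $\alpha_1 = 1$, forcing $t = m$ and $\alpha_1 = \cdots = \alpha_m = 1$, i.e.\ $G \cong \underbrace{{\bf Z}_p \stackrel{n}{*} \cdots \stackrel{n}{*} {\bf Z}_p}_{m\text{-copies}}$, as required. Because the conditions $n \geq c$ and $(q,p)=1$ for $q \leq n+c$ are precisely the hypotheses of Theorem 2.15(i), no new structural input beyond this monotonicity observation is needed, and the proof can be stated briefly by reference to the method of Theorem 3.2.
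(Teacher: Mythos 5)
Your proposal is exactly the paper's intended argument: the paper gives no written proof for this theorem beyond the remark that it "is a consequence of Theorem 2.15 and the above mentioned method," and you carry out precisely that — apply Theorem 2.15(i), telescope the exponents, and reduce to the monotonicity of the differences $g_k-g_{k-1}$ via the count of basic commutators of weight $n+i$ on $r$ letters involving $x_r$. The only flaw is the self-flagged off-by-one: applying Theorem 2.15 with zero infinite factors, the block attached to ${\bf Z}_{p^{\alpha_k}}$ has rank $g_k-g_{k-1}$ (not $g_{k-1}-g_{k-2}$), so since $g_0=g_1=0$ the elementary abelian case yields $p^{g_m}$, confirming the exponent in the statement as written rather than $g_{m-1}$.
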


With the assumption and notation of Theorem 3.1, let $n=1$. Then
the $n$th nilpotent product of ${\bf{Z}}_{p^{\alpha_i}} \ \ (1
\leq i \leq t)$ is the direct product of
${\bf{Z}}_{p^{\alpha_i}}$. So $G$ is a finite abelian $p$-group
of order $p^m$. Also $d_i$ will be equal to $\beta_i$ in Theorem 2.12.
  Therefore the following corollary is a consequence of Theorem 3.1.
\begin{cor} Let $G$ be an abelian group
of order $p^m$. Then $|{\mathcal
N}_{c_1,c_2,...,c_s}M(G)|=p^{\beta_m}$ if and
only if $G$ is an elementary abelian $p$-group, where
$$\beta_m=\chi_{c_s+1}(...(\chi_{c_2+1}(\chi _{c_1+1}(m)))...).$$
\end{cor}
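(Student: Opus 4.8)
The plan is to obtain this corollary as a direct specialization of Theorem 3.1 to the degenerate case $n=1$, so that the whole argument reduces to checking that the hypotheses of Theorem 3.1 survive this substitution and then reading off the translated conclusion. First I would verify the hypotheses. The coprimality requirement ``$(q,p)=1$ for all primes $q$ less than or equal to $n$'' becomes vacuous when $n=1$, since there is no prime $q \leq 1$; thus it places no restriction whatsoever on $p$. The requirement $c_1 \geq n$ becomes $c_1 \geq 1$, which holds automatically for every polynilpotent variety ${\mathcal N}_{c_1,\ldots,c_s}$. Hence, once an arbitrary abelian $p$-group is realized as a $1$st nilpotent product, Theorem 3.1 applies with no further conditions.

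Next I would identify the objects involved. By Definition 2.1, the $1$st nilpotent product of a family of groups is obtained by factoring the free product out by $\gamma_2$ together with the cartesian subgroup, which is precisely the (abelianized) direct product. Therefore, writing the given abelian group of order $p^m$ in invariant-factor form $G \cong {\bf Z}_{p^{\alpha_1}} \oplus \cdots \oplus {\bf Z}_{p^{\alpha_t}}$ with $\alpha_1 \geq \cdots \geq \alpha_t$ and $\sum_{i=1}^t \alpha_i = m$ exhibits $G$ as the $1$st nilpotent product ${\bf Z}_{p^{\alpha_1}} \stackrel{1}{*} \cdots \stackrel{1}{*} {\bf Z}_{p^{\alpha_t}}$, so Theorem 3.1 is directly applicable to it.

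Then I would carry out the numerical translation. In the exponent appearing in Theorem 3.1 the inner sum collapses when $n=1$, since $\sum_{j=1}^{1} \chi_{c_1+j}(m) = \chi_{c_1+1}(m)$, and consequently
$$d_m = \chi_{c_s+1}(...(\chi_{c_2+1}(\chi_{c_1+1}(m)))...) = \beta_m,$$
which is exactly the exponent named in the corollary (and coincides with $\beta_i$ of Theorem 2.12, as noted in the paragraph preceding the statement). Finally, the extremal group in Theorem 3.1, namely the $1$st nilpotent product of $m$ copies of ${\bf Z}_p$, is simply the direct sum ${\bf Z}_p^{(m)}$, the elementary abelian $p$-group of order $p^m$. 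Substituting these identifications into the ``if and only if'' of Theorem 3.1 then yields precisely the assertion that $|{\mathcal N}_{c_1,\ldots,c_s}M(G)| = p^{\beta_m}$ holds exactly when $G$ is elementary abelian.

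Since every step is either a substitution or a reading-off of definitions, I do not anticipate a genuine obstacle. The only point demanding care is the bookkeeping around the degenerate hypotheses: I would want to confirm explicitly that the vacuous coprimality condition and the automatic inequality $c_1 \geq 1$ really do leave Theorem 3.1 applicable to an \emph{arbitrary} abelian $p$-group, with no hidden exclusion of small primes such as $p=2$.
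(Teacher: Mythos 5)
Your proposal is correct and follows essentially the same route as the paper: the authors likewise obtain the corollary by setting $n=1$ in Theorem 3.1, observing that the $1$st nilpotent product of the cyclic groups ${\bf Z}_{p^{\alpha_i}}$ is their direct product and that $d_i$ reduces to $\beta_i$ of Theorem 2.12. Your additional checks (the vacuity of the coprimality condition for $n=1$ and the automatic inequality $c_1\geq 1$) are sound and merely make explicit what the paper leaves implicit.
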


Note that according to Corollary 2.12 the polynilpotent
multiplier of $G$ in the above result has its maximum order. So
the above corollary is a vast generalization of Theorem 2.5.\\

Finally in order to deal with a non-abelian case we present the following
theorem. This theorem is a generalization
 of Theorem 2.10.
\begin{thm} With the previous notation let $G$ be a finite $p$-group
of order $p^m$. If ${\mathcal N}_{c_1,c_2,...,c_s}M(G)=p^{\beta_m}$,
then the following statements holds.\\
(i) There is an epimorphism $ {\mathcal N}_{c_1,c_2,...,c_s}M(G)
\stackrel {\theta}{\rightarrow} {\mathcal
N}_{c_1,c_2,...,c_s}M(G/G')$ which is obtained from the
Fr\"{o}hlich sequence.\\
(ii) If $ker (\theta )=1$, then $G$ is an elementary abelian
$p$-group.
\end{thm}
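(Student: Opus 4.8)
The plan is to imitate the proof of Theorem 2.10, replacing the single-variety data by the polynilpotent structure of Theorem 2.11 together with the bounds of Corollary 2.12, and then to push everything through the Fr\"{o}hlich five-term sequence applied to the abelianization extension. Throughout I write ${\mathcal N} = {\mathcal N}_{c_1,c_2,\ldots,c_s}$ and $V(G) = \gamma_{c_s+1}\circ\cdots\circ\gamma_{c_1+1}(G)$.

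First I would extract from the order hypothesis the fact that $G$ already lies in the variety. Applying the upper bound of Corollary 2.12 to the finite $p$-group $G$ of order $p^m$ gives $|{\mathcal N}M(G)|\,|V(G)| \le p^{\beta_m}$; since the hypothesis forces the first factor to equal $p^{\beta_m}$, we must have $V(G) = 1$. This is the decisive preliminary step: once every verbal value is trivial, the marginal condition $v(\ldots,x_i g,\ldots)=v(\ldots,x_i,\ldots)$ reduces to $1=1$, so the marginal subgroup $V^{*}(G)$ is all of $G$. Hence $G' \subseteq V^{*}(G)$, the extension $1 \to G' \to G \to G/G' \to 1$ is ${\mathcal N}$-central, and moreover $G/V(G) = G$.

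For (i) I would then substitute this central extension into the Fr\"{o}hlich sequence, obtaining
$${\mathcal N}M(G) \stackrel{\theta}{\rightarrow} {\mathcal N}M(G/G') \stackrel{\psi}{\rightarrow} G' \rightarrow G/V(G) \rightarrow (G/G')/V(G/G') \rightarrow 1.$$
Since $G/G'$ is abelian and $c_1 \ge 1$ one has $V(G/G') = 1$, and since $V(G) = 1$ the map $G' \to G/V(G)$ is just the inclusion $G' \hookrightarrow G$, hence injective. Exactness at $G'$ then forces $\mathrm{im}\,\psi = \ker(G'\hookrightarrow G) = 1$, so $\psi$ is the zero map, and exactness at ${\mathcal N}M(G/G')$ gives $\mathrm{im}\,\theta = \ker\psi = {\mathcal N}M(G/G')$; that is, $\theta$ is onto, which proves (i).

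For (ii) the added hypothesis $\ker\theta = 1$ combines with (i) to make $\theta$ an isomorphism, whence $|{\mathcal N}M(G/G')| = p^{\beta_m}$. Writing $|G/G'| = p^k$ with $k \le m$ and applying the upper bound of Corollary 2.12 to the abelian group $G/G'$ (for which $V(G/G')=1$) yields $p^{\beta_m} \le p^{\beta_k}$, so $\beta_m \le \beta_k$. The step I expect to be the real obstacle is turning this into $k=m$: it requires that $i \mapsto \beta_i$ be strictly increasing. I would establish this by noting that each $\chi_{c_j+1}$ is strictly increasing, since $\chi_{c_j+1}(i+1) - \chi_{c_j+1}(i)$ counts exactly the basic commutators of weight $c_j+1$ in which the last letter $x_{i+1}$ occurs, a quantity that is positive for $c_j \ge 1$ — this is precisely the counting observation used in the corrected proof of Theorem 2.9 — so the composite $\beta_i$ is strictly increasing as well. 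Then $\beta_m \le \beta_k$ with $k \le m$ forces $k = m$, i.e.\ $G' = 1$ and $G$ is abelian. Finally Corollary 3.4, applied to the abelian group $G$ of order $p^m$ with $|{\mathcal N}M(G)| = p^{\beta_m}$, identifies $G$ as elementary abelian, completing the argument.
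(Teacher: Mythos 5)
Your proposal is correct and follows essentially the same route as the paper's proof: Corollary 2.12 forces $V(G)=1$, the Fr\"{o}hlich sequence applied to $1\to G'\to G\to G/G'\to 1$ yields the epimorphism $\theta$, and in (ii) the bound $\beta_k<\beta_m$ for $k<m$ forces $G$ abelian so that Corollary 3.4 applies. The only difference is that you make explicit the strict monotonicity of $i\mapsto\beta_i$ (via the basic-commutator count), a point the paper uses implicitly.
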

\begin{proof} $(i)$ Let ${\cal V}$ be the variety of polynilpotent
groups of class row $(c_1,c_2,...,c_s),\\
 {\mathcal
N}_{c_1,c_2,...,c_s}$. By Corollary 2.12 we have $|{\mathcal
V}M(G)||V(G)|\leq p^{\beta_m},$. Also by the hypothesis  $|{\mathcal
N}_{c_1,c_2,...,c_s}M(G)|=p^{\beta_m}$. Therefore $|V(G)|=1$. Now set
$N=G'$ and consider the exact sequence $1 \rightarrow
G'\rightarrow G \rightarrow G/G' \rightarrow 1$. Since
$|V(G)|=1$, the above sequence is an  ${\cal V}$-central
extension. Therefore by Fr\"{o}hlich five-term exact sequence we have the following exact sequence:
$${\mathcal V}M(G)
\stackrel {\theta}{\rightarrow} {\mathcal V}M(G/G')\stackrel
{\beta} \rightarrow G' \stackrel {\alpha}\rightarrow G
\rightarrow G/G' \rightarrow
 1 .$$
Clearly $\alpha$ is a monomorphism and so $Im(\beta)=1$ .
 This means that $\theta $ is an epimorphism.\\
$(ii)$ Let $ker (\theta)=1$. Then $|{\mathcal V}M(G/G')|=|{\mathcal V}M(G)|=p^{\beta_m}\ \
 (*)$. Since $|G|=p^m$ then $|G/G'| \leq p^m$. Hence $|G/G'|= p^m$,
otherwise, if $|G/G'|= p^k$, where $k <m$, then $|{\mathcal V}M(G/G')|\leq |{\mathcal V}M(G/G')||
 V(G/G')|\leq p^{\beta_k}< p^{\beta_m}$, which is a contradiction
 to $(*)$. Hence  $|G/G'| = p^m$ which implies that $G$ is an
 abelian $p$-group. Now by Corollary 3.4 the result
 holds.
 \end{proof}

\section*{Acknowledgments}
The authors would like to thank the referee for his/her careful reading.


\end{document}